\theoremstyle{plain}
\newtheorem{thm}{Theorem}[section]
\newtheorem{prop}[thm]{Proposition}
\newtheorem{lemma}[thm]{Lemma}
\newtheorem{cor}[thm]{Corollary}
\newtheorem{conj}{Conjecture}
\newtheorem*{signconj}{Euler Characteristic Sign Conjecture}
\theoremstyle{remark}
\newtheorem{remark}{Remark}
\title{The Euler Characteristic of a Haken 4-Manifold}
\author{Allan L. Edmonds}
\address{Department of Mathematics, Indiana University, Bloomington, IN 47405}
\email{edmonds@indiana.edu}
\begin{document}
\maketitle
\begin{abstract}
Haken $n$-manifolds are  aspherical manifolds, defined and studied by B. Foozwell and H. Rubinstein, that can be successively cut open along essential codimension-one submanifolds until a disjoint union of $n$-cells is obtained. Such manifolds come equipped with a boundary pattern, a particular kind of decomposition of the boundary into codimension-zero submanifolds.
We prove that  there is a certain numerical function $\varphi(X^{4})$ depending only on the boundary and boundary pattern  of the compact Haken 4-manifold $X^{4}$ (and vanishing if $X^{4}$ has empty boundary),  such that for any  compact Haken 4-manifold $X^{4}$ the Euler characteristic satisfies the inequality  $\chi (X^{4})\geq \varphi(X^{4})$. In particular, if $X^{4}$ is a closed Haken $4$-manifold, then $\chi (X^{4})\geq 0$.
\end{abstract}

\section{Introduction}\label{sec:intro}

We address the following fundamental conjecture about aspherical manifolds for a class of manifolds known as \emph{Haken $n$-manifolds}.
\begin{signconj}
If $X^{d}$ is a closed, aspherical manifold of even dimension $d = 2m$, then the Euler characteristic of $X^{d}$ satisfies $(-1)^m \chi(X^{d}) \geq 0.$
\end{signconj}
\noindent
Recall that a manifold or cell complex $X$ is said to be \emph{aspherical} if $\pi_i(X) = 0$ for all $i \geq 2$ or, equivalently, the universal covering space of $X$ is contractible.  The conjectured sign corresponds to the sign of the Euler characteristic of a product of $m$ surfaces of genus $g \ge 1$. This conjecture was first proposed as a question by W. Thurston in the 1970s. (See the Kirby problem set  \cite{Kirby1997}.) The first interesting and in general still unresolved case is in dimension 4.

Here we study this problem for the more tractable class of aspherical manifolds known as Haken $n$-manifolds, introduced by B. Foozwell in his thesis \cite{Foozwell2007, Foozwell2011}  and developed in recent years by Foozwell and H. Rubinstein \cite{FoozwellRubinstein2011, FoozwellRubinstein2012}. Haken manifolds generalize the well-known examples in dimension 3 explored in particular by F. Waldhausen in the late 1960s. Loosely put, Haken manifolds are manifolds that can be reduced to a disjoint union of cells by successively cutting the manifold open along essential hypersurfaces in the manifold. They provide a convenient framework for proving theorems by induction on dimension, and, most important for us, by induction on the number of steps necessary to reduce to a disjoint union of cells. They also provide a way of injecting topological combinatorics into the study.

\begin{thm}
If $X^{4}$ is a compact Haken 4-manifold without boundary, then $\chi(X^{4})\ge 0$.
\end{thm}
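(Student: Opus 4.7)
My plan is to induct on the length of a Haken hierarchy for $X^{4}$, proving the stronger bounded-below inequality $\chi(X) \geq \varphi(X)$ promised in the abstract; the statement for the closed case then follows because $\varphi$ vanishes on closed manifolds. The base case of the hierarchy is a disjoint union of $4$-cells, for which $\chi$ equals the number of cells and the inequality is transparent (for any reasonable normalization of $\varphi$ on the standard cell pattern). The inductive step carries the substantive content.

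For the step, let $X$ be cut along an essential properly embedded $3$-manifold $Y^{3}$ to produce a Haken $4$-manifold $X|Y$ of strictly shorter hierarchy length. Inclusion-exclusion (applied to the decomposition $X = (X|Y) \cup N(Y)$ with intersection $\partial N(Y) \cong Y \sqcup Y$) gives the basic identity
\[
\chi(X) \;=\; \chi(X|Y) \;-\; \chi(Y).
\]
Because $Y$ is odd-dimensional, $\chi(Y) = \tfrac{1}{2}\chi(\partial Y)$, with $\partial Y$ a closed surface essentially embedded in the boundary pattern of $X$. Essentiality of $Y$ together with the asphericity of the (Haken $3$-manifold) boundary pieces should rule out $2$-sphere and disk components of $\partial Y$, so each component has non-positive Euler characteristic and $\chi(Y) \leq 0$. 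In the closed base case of the theorem, the very first $Y$ is itself closed, giving $\chi(Y) = 0$ and reducing us immediately to a manifold with nontrivial boundary pattern, where the bound $\chi(Y) \leq 0$ is what must be exploited thereafter.

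Combined with the inductive hypothesis $\chi(X|Y) \geq \varphi(X|Y)$, the remaining task is the pattern-level inequality
\[
\varphi(X|Y) \;-\; \chi(Y) \;\geq\; \varphi(X),
\]
which asserts that the jump in $\varphi$ under cutting is always enough to absorb the Euler characteristic lost along $Y$. This forces the shape of $\varphi$: it should be an alternating sum of Euler characteristics of strata of the boundary pattern (codimension-one faces, codimension-two corner arcs, codimension-three corner vertices) with signs and multiplicities tuned precisely so that introducing two new faces (the copies of $Y$), splitting preexisting faces along $\partial Y$, and splitting preexisting corner arcs along $\partial(\partial Y \cap \text{face})$ each contribute to $\varphi$ in exactly the required amount.

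I expect the main obstacle to be pinning down $\varphi$ and verifying the pattern-level inequality across every configuration: essential closed $Y$, essential $Y$ meeting codimension-one faces transversely in essential surfaces, and essential $Y$ whose boundary crosses codimension-two corners in essential $1$-submanifolds. Each configuration modifies the stratification of the pattern differently, and the coefficients in $\varphi$ must be chosen so that essentiality at every stratum (ensuring $\chi \leq 0$ of essential surfaces and arcs in aspherical pieces) forces the inequality uniformly. Once the correct combinatorial invariant is identified and the essentiality/asphericity bounds are deployed at each stratum, the induction should close with no further geometric input.
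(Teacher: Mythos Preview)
Your overall architecture---induct on hierarchy length, seek a boundary-pattern functional $\varphi$ with $\varphi=0$ on closed manifolds and $\varphi(X|Y)-\chi(Y)\ge\varphi(X)$ under cutting---is exactly the paper's. But you have the location of the difficulty inverted, and the gap is genuine.

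\textbf{The base case is not transparent.} A Haken $4$-cell is not just the standard $4$-cube: it is any $4$-ball whose boundary $3$-sphere carries a complete useful boundary pattern with Haken $3$-cell facets, and these can be combinatorially arbitrarily rich. The paper shows that the only viable $\varphi$ is
\[
\varphi(X^{4})=-\tfrac{1}{16}f_{0}(X^{4})+\tfrac{1}{4}\sum_{F^{3}<X^{4}}\chi(F^{3}),
\]
and for a Haken $4$-cell the required inequality $\varphi\le 1$ becomes $f_{0}\ge 4f_{3}-16$. This is \emph{not} elementary: the paper proves that the boundary complex of a Haken $4$-cell dualizes to a \emph{flag} simplicial $3$-sphere, and then invokes the Davis--Okun theorem (the $3$-dimensional Charney--Davis conjecture) to obtain the inequality. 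Your claim that ``the inequality is transparent (for any reasonable normalization of $\varphi$)'' is precisely where the proof would fail without this input.

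\textbf{The inductive step carries no analytic content.} Once the specific $\varphi$ above is fixed, the transformation under cutting is an \emph{equality}: the paper computes directly that $\varphi(X|G)-\chi(G)=\varphi(X)$, using only the combinatorial identities $\chi(G)=\tfrac12\chi(\partial G)$, $\chi(\partial G)=\sum_{k}(-1)^{k}\chi(F^{k}G)$, and $3f_{0}(G)=2\chi(F^{1}G)$ for a Haken $3$-manifold $G$. No essentiality or asphericity bounds of the sort you anticipate (``$\chi\le 0$ of essential surfaces and arcs in aspherical pieces'') are ever used in the inductive step. So your plan to ``tune coefficients so that essentiality at every stratum forces the inequality uniformly'' is aimed at the wrong target: the coefficients are instead tuned so that the cutting identity is exact and the entire burden falls on the cell case, where Davis--Okun is needed.
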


It is a necessary challenge is to find an interpretation of the conjecture for Haken manifolds that applies to aspherical manifolds with boundary that is not necessarily aspherical or $\pi_{1}$-injective, because such manifolds arise along the way as one cuts open a Haken manifold.
In general, the boundary of a Haken $n$-manifold comes equipped with a suitable regular decomposition into manifold pieces, called a \emph{boundary pattern}, in practice induced by the process of cutting open a manifold along hypersurfaces. 

\begin{thm}
There is a  numerical function $\varphi(X^{4})$ depending only on the boundary and boundary pattern  of the compact Haken 4-manifold $X^{4}$ (and vanishing if $X^{4}$ has empty boundary),  such that for any  compact Haken 4-manifold $X^{4}$ the Euler characteristic satisfies the inequality  $\chi (X^{4})\geq \varphi(X^{4})$.
\end{thm}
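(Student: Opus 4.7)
My plan is to proceed by induction on the length of a Haken hierarchy for $X^{4}$. When one cuts a Haken 4-manifold $X^{4}$ along an essential codimension-one submanifold $Y^{3}$ to obtain a new Haken manifold $X'$ whose hierarchy is strictly shorter, the Euler characteristics satisfy
\begin{equation*}
\chi(X^{4}) \;=\; \chi(X') - \chi(Y^{3}).
\end{equation*}
Because $Y^{3}$ is an odd-dimensional compact manifold with boundary, $\chi(Y^{3}) = \tfrac{1}{2}\chi(\partial Y^{3})$, and $\partial Y^{3}$ lies entirely in $\partial X^{4}$, so the correction term is determined by the way $Y^{3}$ meets the existing boundary pattern. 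The inductive hypothesis $\chi(X') \ge \varphi(X')$ will then give
\begin{equation*}
\chi(X^{4}) \;\ge\; \varphi(X') - \tfrac{1}{2}\chi(\partial Y^{3}),
\end{equation*}
and the task is to define $\varphi$ so that the right-hand side dominates $\varphi(X^{4})$.

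\textbf{Defining $\varphi$.} I would build $\varphi(X^{4})$ as an explicit numerical function of the stratification of $\partial X^{4}$ given by the boundary pattern. Writing the pattern as a collection of 3-dimensional patches $F_{i}$ whose pairwise intersections are 2-dimensional patches, whose triple intersections are 1-dimensional, and so on, a natural ansatz is an alternating expression of the form
\begin{equation*}
\varphi(X^{4}) \;=\; \tfrac{1}{2}\chi(\partial X^{4}) \;-\; c_{2}\sum_{i<j}\chi(F_{i}\cap F_{j}) \;+\; c_{3}\sum_{i<j<k}\chi(F_{i}\cap F_{j}\cap F_{k}) \;-\; \cdots,
\end{equation*}
with weights $c_{s}$ to be determined. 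Since $\partial Y^{3}$ and its various intersections with the strata of $\partial X^{4}$ are precisely the features that distinguish the pattern of $X'$ from that of $X^{4}$, the correct coefficients are essentially forced by demanding that the inductive step close.

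\textbf{Base case.} The induction bottoms out at $X^{4}=\bigsqcup\,\text{4-cells}$ equipped with the cellular boundary pattern produced by the hierarchy. Here $\chi(X^{4})$ equals the number of components, and the required inequality reduces to checking that $\varphi$ evaluated on one cell with its pattern is at most $1$. Because the pattern on a terminal cell is very constrained — combinatorially a cellulation of the boundary $S^{3}$ — this step is a direct computation, a kind of discrete Gauss--Bonnet identity for the boundary 3-sphere.

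\textbf{Inductive step and main obstacle.} With $\varphi$ defined as above, the difference $\varphi(X') - \varphi(X^{4}) + \tfrac{1}{2}\chi(\partial Y^{3})$ becomes a combinatorial quantity expressed entirely in terms of the Euler characteristics of $Y^{3}$ and of its intersections $Y^{3}\cap F_{i_{1}}\cap\cdots\cap F_{i_{s}}$ with the old strata. The principal difficulty, and the main obstacle of the proof, is to choose the coefficients $c_{s}$ so that this combinatorial quantity is nonnegative for \emph{every} admissible way an essential hypersurface $Y^{3}$ can meet the boundary pattern. I expect this to require the $\pi_{1}$-injectivity of each intersection of $Y^{3}$ with a stratum — an essential surface in a 3-manifold has nonpositive Euler characteristic, and similar facts in lower dimensions provide the sign control needed to turn the combinatorial identity into the desired inequality. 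Once this is established, the closed case follows at once, since with $\partial X^{4} = \emptyset$ every term in $\varphi(X^{4})$ vanishes.
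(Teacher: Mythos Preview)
Your overall architecture---induction on hierarchy length, with the cutting identity $\chi(X^{4})=\chi(X')-\chi(Y^{3})$ and a $\varphi$ built from Euler characteristics of strata---matches the paper's approach, and the paper indeed arrives at a specific $\varphi$ of roughly the shape you propose (after reduction, $\varphi(X^{4})=-\tfrac{1}{16}f_{0}(X^{4})+\tfrac{1}{4}\sum_{F^{3}<X^{4}}\chi(F^{3})$). However, there is one genuine gap and one misdiagnosis.

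\textbf{The base case is not a direct computation.} You describe the terminal step---checking $\varphi\le 1$ on a Haken $4$-cell---as ``a direct computation, a kind of discrete Gauss--Bonnet identity.'' This is precisely where the real content lies, and it is far from elementary. For a Haken $4$-cell the inequality $\varphi(X^{4})\le 1$ unpacks to $f_{0}\ge 4f_{3}-16$, equivalently $f_{1}^{*}\ge 5f_{0}^{*}-16$ on the dual simplicial $3$-sphere. This is exactly the $3$-dimensional Charney--Davis inequality. It is \emph{false} for general simplicial $3$-spheres (e.g.\ $\partial\Delta^{4}$), and its validity for \emph{flag} simplicial $3$-spheres is the Davis--Okun theorem, proved via $L^{2}$-cohomology. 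So two nontrivial ingredients are missing from your outline: (i) the boundary complex of a Haken $4$-cell is dual to a \emph{flag} simplicial $3$-sphere (this uses the ``useful boundary pattern'' axioms, not just that it is a cellulation of $S^{3}$), and (ii) the Davis--Okun theorem itself. Without these, the base case does not close.

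\textbf{The inductive step does not need sign control from essentiality.} You anticipate that the difference $\varphi(X')-\varphi(X^{4})-\chi(Y^{3})$ will be nonnegative thanks to $\pi_{1}$-injectivity forcing nonpositive Euler characteristics of essential surfaces. In fact, with the correct coefficients the paper shows this quantity is \emph{identically zero}: the cancellation comes from purely combinatorial identities on the boundary pattern of $Y^{3}$ (such as $3f_{0}(Y^{3})=2\chi(F^{1}Y^{3})$, reflecting that every vertex of the pattern has valence three), not from any asphericity or injectivity hypothesis. So the inductive step is easier than you expect, while the base case is much harder.
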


A crucial step is the following result.

\begin{thm}\label{thm:boundaryflag}
The boundary complex of a Haken 4-cell is dual to a flag simplicial 3-sphere.
\end{thm}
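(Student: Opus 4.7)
The plan is to read off the dual complex $K$ directly from the stratification of $\partial D^{4}=S^{3}$ induced by the boundary pattern of a Haken $4$-cell, and to verify in turn that $K$ is simplicial, homeomorphic to $S^{3}$, and flag. The key input is the defining local model for a Haken $n$-cell: at every point of $\partial D^{n}$ the stratification is standardly modeled on a face of the $n$-simplex, that is, on an intersection of coordinate half-spaces in $\mathbb{R}^{n}$. For $n=4$ this realizes the pattern on $S^{3}$ as a regular CW decomposition into facets ($3$-disks), ridges ($2$-disks), arcs, and corner vertices, with at most four facets meeting at any corner vertex. The dual complex $K$ is then defined by attaching one $k$-simplex for each codimension-$(k+1)$ stratum, with face relations inherited from inclusion of strata.

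The local model immediately forces $K$ to be a genuine simplicial complex (distinct maximal strata have distinct vertex sets) and identifies $|K|$ with the regular CW dual of $S^{3}$, so $|K|\cong S^{3}$. The substantive content of the theorem is therefore the flag condition, which I would approach by reducing, as far as possible, to a statement one dimension down. The link $\mathrm{lk}(v_{0},K)$ of a vertex $v_{0}\in K$ dual to a facet $F_{0}$ is canonically identified with the dual complex of the boundary pattern that the ambient cell induces on $\partial F_{0}=S^{2}$; since $F_{0}$ inherits the structure of a Haken $3$-cell in the natural inductive framework, the three-dimensional analogue of the theorem (which I would set up as the base case of an induction on dimension) realizes $\mathrm{lk}(v_{0},K)$ as a flag simplicial $2$-sphere. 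Thus every vertex link of $K$ is flag.

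The main obstacle lies in upgrading flagness of vertex links to global flagness of $K$, because $3$-cliques in $K^{(1)}$ cannot be detected inside any single vertex link without already knowing that they bound simplices. I expect to resolve this by a direct local-rigidity argument: given three facets $F_{0},F_{1},F_{2}$ meeting pairwise in ridges, I would examine the stratification along the ridge $F_{0}\cap F_{1}$ and use the standardness of the local model to force $F_{2}$ to meet $F_{0}\cap F_{1}$ transversely in an arc, producing the desired $2$-simplex $v_{0}v_{1}v_{2}$ of $K$. A parallel argument, now supplemented by the flagness of links, then promotes $4$-cliques to $3$-simplices via corner vertices and completes the proof. The most delicate step I anticipate is exactly this local rigidity for triangles, where the inductive crutch of flag links is not yet available and one must argue directly from the axioms of a Haken boundary pattern.
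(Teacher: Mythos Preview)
Your overall plan is sound, and your treatment of empty $3$-simplices via flagness of vertex links is a valid alternative to the paper's argument (and arguably tidier). The genuine gap is in the triangle step. The ``standardness of the local model'' is a pointwise condition: it prescribes how many facets meet at a given point of $S^{3}$ and how they sit there, but it cannot force three facets $F_{0},F_{1},F_{2}$ that pairwise intersect to share any common point at all. Nothing in the local model prevents $F_{2}$ from meeting $F_{0}$ and $F_{1}$ in ridges that lie far from $F_{0}\cap F_{1}$; along $F_{0}\cap F_{1}$ the local picture is perfectly happy with only $F_{0}$ and $F_{1}$ present. The axiom you actually need is \emph{usefulness} of the boundary pattern: one of its defining clauses (for a simply connected manifold) is precisely that if three facets meet pairwise nontrivially then their triple intersection is nonempty. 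That is exactly the absence of empty triangles in the dual, and the paper invokes it directly rather than trying to derive it from local standardness.

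For empty $3$-simplices the paper takes a different route from yours: an empty $\partial\Delta^{3}$ in the dual is a separating $2$-sphere in $S^{3}$, which on the primal side expresses the Haken $4$-cell as a ``vertex sum'' of two simple cells; the $3$-faces crossing that vertex then become vertex sums of $3$-cells and carry forbidden essential small $2$-disks, again contradicting usefulness. Your approach---once all triangles are filled, a $4$-clique yields a $3$-clique in a vertex link, and the $3$-dimensional case (the dual of the boundary of a Haken $3$-cell is a flag $2$-sphere, by the same usefulness axiom one dimension down) fills it---also works and is shorter. So the only substantive fix needed is to replace the local-rigidity sketch for triangles with a direct appeal to usefulness.
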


Duality and the flag complex condition are discussed in Section \ref{sec:duality}.
Using this result the main theorem is connected to the 3-dimensional Charney-Davis conjecture \cite{CharneyDavis1995a}, proved by M. Davis and B. Okun \cite{DavisOkun2001}. Their work proves the Euler Characteristic Sign Conjecture for closed, non-positively curved, cubical 4-manifolds.

It turns out that the analog of Theorem \ref{thm:boundaryflag} is true in all dimensions. Details will be deferred to another paper.

\subsection*{Outline}
In Section \ref{sec:facts} we present an overview of Haken $n$-manifolds, including basic definitions, examples, and fundamental theorems. In Section \ref{sec:approach} we describe our approach to the Euler Characteristic Sign Conjecture for Haken manifolds. In Section \ref{sec:results} we carefully state three technical results and state and prove the main theorem from the more {}technical results. In Section \ref{sec:phifunction}    we derive the technical results about the form of a possible $\varphi$-function and a corresponding transformation law describing how it behaves when cutting open a manifold along a hypersurface. In Section \ref{sec:duality} we analyze more deeply the way the $\varphi$-function behaves for Haken cells and explain why the dual of a boundary complex of a Haken 4-cell gives a flag simplicial 3-sphere, thus relating our problem to the well-known Charney-Davis conjecture.

\subsection*{Acknowledgements}This paper grew out of the author's talk ``Introduction to Haken $n$-manifolds, applied to the Euler characteristic of an aspherical 4-manifold'', delivered to the Groups, Geometry and Dynamics Conference in 
Almora, India, in December 2012. The author thanks the conference organizers for the opportunity to develop these thoughts and to present them in the form of a more-detailed paper. The author also expresses his appreciation to Hyam Rubinstein from whom he first learned about Haken $n$-manifolds, to Bell Foozwell for helpful correspondence about Haken $n$-manifolds, and to Steve Klee for correspondence related to dualizing the condition of being a Haken $n$-cell. Thanks also to Mike Davis for discussion that showed that Theorem \ref{thm:boundaryflag} generalizes. We expect to include that extension together with a subsequent analysis that reduces the Euler Characteristic Sign Conjecture for Haken $n$-manifolds to the corresponding Charney-Davis conjecture in dimension $n-1$ in a forthcoming joint paper. Finally, we gratefully acknowledge the constructive comments of the referee, whose corrections and suggestions were much appreciated.

\section{Basic Facts about Haken $n$-Manifolds}\label{sec:facts}
This section is adapted from the articles of Foozwell and Rubinstein \cite{FoozwellRubinstein2011, FoozwellRubinstein2012}, to which we refer for more complete details. We offer an impressionistic description in which we emphasize points relevant to the present application and de-emphasize less relevant points.

\subsection{The idea of a Haken $n$-manifold}

Informally, a Haken $n$-manifold is a compact $n$-manifold with possible boundary together with a given prescription for cutting it along a succession of properly embedded, two-sided, codimension-one submanifolds so that in the end the result is a collection of $n$-balls.  The formal technical details of the definition will be summarized in several steps below. The development is due to Foozwell \cite{Foozwell2007} and is based upon Johannson's concept of a \emph{boundary pattern}, which the latter used in his study of properties of Haken 3-manifolds.

Of course  classical surfaces of non-positive Euler characteristic, and tori in all dimensions  have this property. We also have a well-developed theory in dimension three, which was initiated by Haken \cite{Haken1962}, brought to prominence by Waldhausen \cite{Waldhausen1968}, and studied from the present combinatorial point of view by Johannson \cite{Johannson1979, Johannson1994} .

\emph{Three simple examples to ponder:}  
\begin{enumerate}
\item
A $2$-cell, is aspherical with aspherical boundary but the boundary is not $\pi_{1}$-injective.  The same applies to $S^{1}\times D^{2}$.
\item
A nontrivial classical knot exterior gives an example of an aspherical manifold with aspherical and $\pi_{1}$-injective boundary.
\item
An $n$-ball, $n\ge 3$, is aspherical with non-aspherical boundary that is $\pi_{1}$-injective.
\end{enumerate} 

To deal with these situations we require that the boundary be decomposed into codimension-zero submanifolds called faces that are aspherical and $\pi_{1}$-injective. The same requirement applies to intersections of faces.

The term \emph{Haken $n$-manifold} will be defined in several steps. To begin with it is a compact manifold with possible boundary. The boundary is endowed with a  boundary pattern, that is, a finite collection of compact, connected $(n -1)$-manifolds in the boundary, such that the intersection of any $k$ of them is either empty or consists of $(n - k)$-manifolds, for $k = 1, \dots , n + 1$. 

The boundary pattern is required to be ``complete'' and ``useful''.

``Completeness'' means that the union of the faces is the entire boundary. All boundary patterns considered in this paper will be complete. If no particular boundary pattern is prescribed or implied, then it is understood that the boundary pattern consists of the components of the boundary.

``Usefulness'' is more technical but implies the following three conditions: 
\begin{enumerate}\label{useful}
\item
each face maps $\pi_{1}$-injectively into the manifold; 
\item
when the manifold is simply connected, the intersection of any two faces is connected; 
\item
when the manifold is simply connected, if three faces intersect pairwise nontrivially then  their three-fold intersection is nonempty.
\end{enumerate}

The full usefulness condition also implies that there are no interesting relations among the images of the fundamental groups of two adjacent or three mutually adjacent faces in the fundamental group of the manifold. We will not be much concerned with this aspect here. The full definition of usefulness is couched in terms of ``small 2-disks'' being trivial in an appropriate sense. More precisely, a proper map of a 2-disk into the manifold that meets the boundaries of faces transversely in 0, 2, or 3 points is required to be homotopic to a disk in the boundary such that the pre-image of the boundaries of faces is empty, a single arc, or a triod, respectively. See \cite{Foozwell2011, FoozwellRubinstein2011} for more details.

The components of the intersections of the faces give rise to a structure called a ``boundary complex'', analogous to a regular CW complex, except that the manifold pieces need not be cells in general.

\emph{Haken $n$-cells}, fundamental building blocks for general Haken $n$-manifolds, are defined inductively as follows. A point is a Haken 0-cell. An interval, with the boundary pattern consisting of the two boundary points, is a Haken 1-cell.  Usefulness requires that a Haken $2$-cell is a $p$-gon, with $p\ge 4$, i.e. with the boundary pattern given by the $p$ edges.

For $n\ge 3$ a Haken $n$-cell is an $n$-cell with complete and useful boundary pattern whose faces are themselves Haken $(n-1)$-cells. It follows that any $k$  faces intersect in a Haken $(n-k)$-cell, or have empty intersection.  

Important examples of Haken cells arise as convex polytopes.  Among the regular polytopes in dimension 3, for example, the cube and dodecahedron are Haken 3-cells, while the tetrahedron, octahedron, and icosahedron (which have triangular faces) are not. The natural cell-structure on the boundary of the octahedron does not even define a boundary pattern, since there are pairs of triangles that intersect in a single vertex rather than in an edge.

When one cuts open a manifold along a hypersurface meeting all the faces (and their faces, etc.) transversely, then one also cuts open the faces of the boundary pattern and creates a new boundary pattern of the cut-open manifold that includes two disjoint copies of the cutting hypersurface. The intention is to do this in such a way that the cut-open manifold is somehow simpler than the original.

Finally, a Haken $n$-manifold is an $n$-manifold $M$ together with a \emph{hierarchy,} that is, a given choice of a succession of manifolds with complete useful boundary patterns and hypersurfaces 
$$(M_{0},F_{0}), \dots, (M_{k},F_{k})$$ where $M_{0}$ is the original manifold with its given boundary pattern, and $M_{1}$ is the result of cutting open $M_{0}$ along $F_{0}$ and giving $M_{1}$ the induced boundary pattern, etc. We require that $M_{k+1}$ is a disjoint union of Haken $n$-cells. This completes the formal definition of Haken $n$-manifolds.

In dimension 3, Waldhausen \cite{Waldhausen1968} proved that an irreducible 3-manifold is Haken if and only if it contains a (two-sided) essential surface. In particular one can start with an essential surface, cut open along it, and then construct the rest of a hierarchy. There appears to be no such theorem in higher dimensions.

If the original Haken $n$-manifold has nonempty faces, then it follows from the definitions that those faces are also Haken $(n-1)$-manifolds, and so on through the faces of the full boundary complex.

See Foozwell-Rubinstein  \cite{FoozwellRubinstein2011} for more details.

\subsection{Some standard Haken manifolds}
Here we compile a brief list of Haken manifolds to be used in subsequent sections. Usually we do not explicitly describe a hierarchy. A surface of non-positive Euler characteristic, with boundary pattern consisting of the boundary components is always a Haken 2-manifold. One can add vertices and edges, at least 2 of each for each boundary component, and still have a Haken 2-manifold. A $p$-gon  is a Haken $2$-cell, provided $p\ge 4$. 

Any standard Haken $3$-manifold with $\pi_{1}$-injective boundary components is also a Haken 3-manifold in the present sense, with boundary pattern consisting of the components of the boundary. 

If $G^{3}$ is a Haken $3$-manifold without boundary, then $G^{3}\times S^{1}$ is a Haken $4$-manifold without boundary. Also $G^{3}\times I$ is a Haken $4$-manifold with boundary pattern consisting of  two copies of $G^{3}$.

If $G^{3}$ is a Haken 3-manifold with boundary a surface $T_{g}$ of genus $g\ge 1$, then $G^{3}\times S^{1}$ is a Haken 4-manifold with boundary $T_{g}\times S^{1}$ and a single boundary pattern face $T_{g}\times S^{1}$. And $G^{3}\times I$ is a Haken 4-manifold with boundary pattern consisting of $T_{g}\times I$ and $G^{3}\times \{0\}$ and $G^{3}\times\{1\}$.

If $T_{g}$ denotes a surface of genus $g\ge 1$, then $T_{g}\times I^{2}$ is a Haken 4-manifold with boundary pattern faces consisting of four copies of $T_{g}\times I$.

\subsection{Recent results about Haken $n$-manifolds}

Here are three  basic results about Haken manifolds that generalize results in dimension 3.
\begin{thm}[Foozwell, see \cite{FoozwellRubinstein2011}]
A Haken $n$-manifold is aspherical.
\end{thm}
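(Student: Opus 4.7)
The natural approach is a double induction: an outer induction on the dimension $n$, and an inner induction on the length $k$ of the hierarchy $(M_0,F_0),\dots,(M_k,F_k)$. In dimensions $0$ and $1$ the statement is trivial, so we may assume asphericity is already known for Haken $(n-1)$-manifolds, which is what allows us to treat each cutting hypersurface $F_i$ as an aspherical piece: since $F_i$ appears (doubled) as a union of faces of the induced boundary pattern on $M_{i+1}$, and since faces in a Haken $n$-manifold are themselves Haken $(n-1)$-manifolds, the outer inductive hypothesis gives that $F_i$ is aspherical.

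For the inner induction, the base case is $k=-1$, i.e.\ $M_0$ itself is a disjoint union of Haken $n$-cells. Each Haken $n$-cell is by definition an $n$-ball, hence contractible and so aspherical. For the inductive step, suppose $M_{i+1}$ is aspherical; we want to show $M_i$ is aspherical. The manifold $M_i$ is recovered from $M_{i+1}$ by identifying the two sides of the cut along $F_i$. Depending on whether $F_i$ separates, this presents $\pi_1(M_i)$ as either an amalgamated free product $\pi_1(A) *_{\pi_1(F_i)} \pi_1(B)$ (where $M_{i+1} = A \sqcup B$) or an HNN extension of $\pi_1(M_{i+1})$ over $\pi_1(F_i)$. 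In either case the usefulness of the boundary pattern (in particular the $\pi_1$-injectivity condition (1) on faces) guarantees that both inclusions $F_i \hookrightarrow M_{i+1}$ are $\pi_1$-injective, so the amalgamation (or HNN) is in ``good'' position.

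The topological asphericity of $M_i$ then follows from the standard gluing principle: if $X$ is obtained from an aspherical space $Y$ by identifying two disjoint $\pi_1$-injective copies of an aspherical subspace $Z$, then $X$ is aspherical. One concrete way to see this is to construct the universal cover $\widetilde{M}_i$ explicitly as a tree of copies of the universal cover $\widetilde{M}_{i+1}$ glued along lifts of $F_i$; the indexing tree is the Bass--Serre tree of the splitting, each vertex space is contractible by the inductive hypothesis, each edge space is contractible by the outer inductive hypothesis on $F_i$, and $\pi_1$-injectivity ensures the inclusions of edge spaces into vertex spaces are $\pi_1$-injective lifts (hence the covering theory behaves as expected). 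A straightforward nerve/Mayer--Vietoris argument then shows $\widetilde{M}_i$ is contractible.

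The main obstacle is verifying that the \emph{ambient} inclusion $F_i \hookrightarrow M_i$ (as opposed to the inclusions into $M_{i+1}$ given directly by the boundary pattern) is $\pi_1$-injective, and more generally that the Bass--Serre construction gives the actual universal cover rather than merely a covering space. This is exactly the role played by the technical content of ``useful'' boundary patterns: it is designed so that no small disk can create an unexpected relation across a cutting hypersurface, which in the Bass--Serre picture translates to no unexpected collapsing of the tree. Once this point is pinned down by appealing to the relevant small-disk condition of Foozwell, the rest of the argument is the formal gluing lemma, and the double induction closes.
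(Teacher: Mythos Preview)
The paper does not actually prove this theorem; it merely records it as a result of Foozwell and cites \cite{FoozwellRubinstein2011}. There is therefore no ``paper's own proof'' to compare against.

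Your sketch is the standard argument and is essentially correct. One small point: you worry in your final paragraph about the \emph{ambient} $\pi_1$-injectivity of $F_i \hookrightarrow M_i$, but this is not something you need to verify separately. The gluing lemma only requires that the edge spaces be aspherical and that the edge-to-vertex inclusions (i.e.\ the two inclusions of $F_i$ into the components of $M_{i+1}$) be $\pi_1$-injective; both are supplied directly by the inductive hypotheses and the usefulness condition on boundary patterns. The injectivity of $F_i \hookrightarrow M_i$ then follows automatically from the normal form theorem for amalgamated products and HNN extensions, rather than being an additional hypothesis to check. With that clarification your double induction closes cleanly.
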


And more substantially,
\begin{thm}[Foozwell \cite{Foozwell2011}]
The universal covering space of a Haken $n$-manifold is homeomorphic to $\mathbb{R}^{n}$.
\end{thm}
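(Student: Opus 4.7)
The plan is to proceed by a double induction: an outer induction on the dimension $n$ and an inner induction on the length $k$ of the hierarchy $(M_0,F_0),\ldots,(M_k,F_k)$. In low dimensions ($n \leq 3$) the statement is classical, so we may assume $n \geq 4$ and that the theorem holds in all lower dimensions and for hierarchies of length less than $k$. The base case of the inner induction, $k=0$, reduces to the fact that a Haken $n$-cell is a topological $n$-cell, whose universal cover is itself and is homeomorphic to $\mathbb{R}^n$ in the appropriate sense (passing to the interior if one insists on the open model).

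For the inductive step, let $M = M_0$ have a hierarchy of length $k \geq 1$ and let $M_1$ be the result of cutting $M$ along $F_0$. Since $M_1$ inherits a hierarchy of length $k-1$, its universal cover $\widetilde{M_1}$ is $\mathbb{R}^n$ by the inner induction. The usefulness condition forces $F_0$ to be $\pi_1$-injective in $M$, so every component of the preimage of $F_0$ in $\widetilde{M}$ is a universal cover of $F_0$; by the outer induction each such lift is homeomorphic to $\mathbb{R}^{n-1}$. The combinatorial pattern of the gluings is governed by the Bass--Serre tree for the splitting of $\pi_1(M)$ associated with $F_0$ (an amalgamated product or an HNN extension, depending on whether $F_0$ separates). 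Thus $\widetilde{M}$ is assembled as a tree-indexed union of copies of $\widetilde{M_1} \cong \mathbb{R}^n$ glued along copies of $\widetilde{F_0} \cong \mathbb{R}^{n-1}$.

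The main obstacle is to show that this tree-like assembly is again homeomorphic to $\mathbb{R}^n$. The preceding asphericity theorem guarantees that $\widetilde{M}$ is contractible, so in dimension $n \geq 5$ one may invoke Stallings' characterization of Euclidean space and reduce the problem to verifying simple connectivity at infinity, which the tree structure makes tractable. In dimension $4$, which is the case of primary interest, one needs a more delicate argument: the most natural route is to enumerate the vertices of the Bass--Serre tree so as to exhaust $\widetilde{M}$ by a nested sequence of compact topological $n$-cells, adding one copy of $\widetilde{M_1}$ at a time and using the dimension-$(n-1)$ inductive hypothesis to verify that each successive intersection is itself a cell. Once such an exhaustion is established, $\widetilde{M}$ is a monotone union of $n$-cells and hence is $\mathbb{R}^n$ by the classical theorem of M.~Brown.

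The main technical burden throughout is the bookkeeping of boundary patterns: one must verify that cutting, passing to the cover, and re-assembling all interact coherently with the faces and their intersections, so that the gluing data at each step genuinely consists of lifts of $F_0$ meeting only along tree-edge data and not across unexpected identifications. The hardest single step is the monotone-exhaustion argument in dimension $4$, because of the well-known failure of the simply-connected-at-infinity criterion to give a homeomorphism in that dimension without additional smoothness or control; here one leans heavily on the fact that each intermediate piece added is itself a universal cover of a manifold with a shorter hierarchy, inheriting all the good properties by induction.
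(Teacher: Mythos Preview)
The paper does not contain a proof of this theorem. It is stated as a result of Foozwell, with the citation \cite{Foozwell2011}, in the subsection ``Recent results about Haken $n$-manifolds,'' and no argument is given; it functions purely as background. So there is nothing in the paper to compare your proposal against.

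As a separate matter, what you have written is an outline rather than a proof: the crucial steps (that each piece in the tree-of-spaces description is genuinely a copy of $\mathbb{R}^n$ with boundary pattern, that the successive unions are $n$-cells so that Brown's theorem applies, and that the boundary-pattern bookkeeping works out) are asserted but not carried out. Your instincts about the shape of the argument---double induction, $\pi_1$-injectivity of $F_0$, Bass--Serre tree, monotone union of cells---are broadly in line with how Foozwell's actual proof in \cite{Foozwell2011} proceeds, but the substance of that paper lies precisely in the places you have flagged as ``the main technical burden'' and ``the hardest single step.'' If you intend to reconstruct the proof, those are the parts that require real work; as written, the proposal would not be accepted as a proof.
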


From this it follows that not all aspherical manifolds are Haken or ``virtually Haken'', since the universal covering space of certain aspherical manifolds first constructed by Mike Davis are not simply connected at infinity and hence not homeomorphic to euclidean space.

\begin{thm}[Foozwell \cite{Foozwell2007}]
The word problem for the fundamental group of a Haken $n$-manifold is solvable.
\end{thm}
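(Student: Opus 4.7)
The plan is to prove the result by a double induction on the dimension $n$ and on the length $k$ of the hierarchy. The base of the induction is a Haken $n$-manifold with hierarchy of length zero, i.e.\ a disjoint union of Haken $n$-cells. A Haken $n$-cell is, in particular, a topological $n$-ball and hence simply connected, so its fundamental group is trivial and the word problem is vacuously solvable.

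For the inductive step, let $M = M_{0}$ be a Haken $n$-manifold with hierarchy $(M_{0},F_{0}), \dots, (M_{k},F_{k})$, and let $M_{1}$ be the result of cutting $M_{0}$ along the essential hypersurface $F_{0}$. Then $M_{1}$ is a Haken $n$-manifold with a hierarchy of length $k-1$, so by the hierarchy-length induction the word problem in $\pi_{1}(M_{1})$ is solvable. The hypersurface $F_{0}$, with its induced boundary pattern, is itself a Haken $(n-1)$-manifold, so by the dimensional induction the word problem in $\pi_{1}(F_{0})$ is also solvable. Van Kampen's theorem, applied to the one-edge graph of groups dual to the splitting, now expresses $\pi_{1}(M)$ either as an HNN extension of $\pi_{1}(M_{1})$ with associated subgroups $\pi_{1}(F_{0})$, when $F_{0}$ is non-separating, or as an amalgamated free product $\pi_{1}(M_{1}^{L}) \ast_{\pi_{1}(F_{0})} \pi_{1}(M_{1}^{R})$, when $F_{0}$ is separating. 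Usefulness of the boundary pattern guarantees that the two copies of $F_{0}$ appearing on $\partial M_{1}$ are $\pi_{1}$-injective, so the amalgam is genuine.

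The final step is to pass from solvability of the word problem in the vertex and edge groups to solvability in the whole amalgam. By the classical normal-form theorems for HNN extensions (Britton's lemma) and amalgamated products (B.H.\ Neumann), it suffices to exhibit, in addition to solvable word problem in $\pi_{1}(M_{1})$, an algorithm for the generalized word problem of the edge subgroup $\pi_{1}(F_{0}) \hookrightarrow \pi_{1}(M_{1})$: decide for each $g \in \pi_{1}(M_{1})$ whether $g$ lies in the image of $\pi_{1}(F_{0})$. This membership problem is the main obstacle, since it does not follow in general from solvable word problem in the amalgam constituents. I would approach it geometrically: a loop in $M_{1}$ based at a point of $F_{0}$ represents an element of $\pi_{1}(F_{0})$ precisely when it is freely homotopic, rel base point, into $F_{0}$. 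This question can be made algorithmic by developing a normal-form theory for loops adapted to the hierarchy, an analog of Haken's normal surface theory in the Haken $n$-manifold setting, reducing the existence of such a homotopy to a finite search. Combining this with the inductive hypotheses yields solvability of the word problem for $\pi_{1}(M)$.
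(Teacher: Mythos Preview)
The paper does not prove this theorem; it merely quotes it as a result of Foozwell \cite{Foozwell2007} in the survey section on recent results about Haken $n$-manifolds. So there is no proof in the paper to compare against, and your proposal must be judged on its own merits.

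Your overall inductive framework is the natural one and is presumably close in spirit to Foozwell's actual argument: induct on dimension and hierarchy length, use van Kampen to express $\pi_{1}(M)$ as an amalgamated product or HNN extension over $\pi_{1}(F_{0})$, and appeal to normal-form theorems. You also correctly isolate the crux: the normal-form reduction requires not just solvable word problem in the vertex and edge groups but an algorithm for the \emph{membership problem} of $\pi_{1}(F_{0})$ inside $\pi_{1}(M_{1})$ (and, for iterating Britton's lemma, one also needs coset representatives to be computable).

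The gap is that you do not actually solve this membership problem. Saying you would ``approach it geometrically'' via ``a normal-form theory for loops adapted to the hierarchy \dots\ reducing the existence of such a homotopy to a finite search'' is a description of a strategy, not an argument. Developing such a normal-form/finiteness theory is precisely the substantial content of Foozwell's thesis work in this direction, generalizing Waldhausen's dimension-$3$ methods; it is not a routine step and cannot be waved through. As written, then, your proposal is a correct outline with the hard theorem-specific work left undone.
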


Recently Foozwell and Rubinstein have proved 

\begin{thm}[Foozwell-Rubinstein \cite{FoozwellRubinstein2012}]
A closed Haken 3-manifold is the boundary of a Haken 4-manifold (with boundary pattern consisting of the components of the 3-manifold).
\end{thm}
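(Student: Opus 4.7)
The plan is to argue by induction on the hierarchy length of the closed Haken 3-manifold $M^3$, constructing the bounding Haken 4-manifold $W^4$ together with a compatible hierarchy.

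Let $(M = M_0, F_0), (M_1, F_1), \ldots, (M_k, F_k)$ be a hierarchy of $M$ with $M_{k+1}$ a disjoint union of Haken 3-cells $C_1, \ldots, C_r$. As the base case, I would observe that each Haken 3-cell $C_j$ is bounded by the natural Haken 4-cell $C_j \times I$, whose boundary-pattern faces are $C_j \times \{0\}$, $C_j \times \{1\}$, and $G \times I$ for each 2-face $G$ of $\partial C_j$; the disjoint union $W_{k+1} = \bigsqcup_j C_j \times I$ is then a Haken 4-manifold whose distinguished ``bottom'' faces $C_j \times \{0\}$ assemble to form $M_{k+1}$. For the inductive step going from $W_{i+1}$ to $W_i$, I would glue the two side faces of $\partial W_{i+1}$ corresponding to $F_i^+$ and $F_i^-$ (each of the form $F_i \times I$) together, via the identification coming from the hierarchy of $M$. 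This converts the distinguished bottom boundary face from $M_{i+1}$ to $M_i$ and introduces an interior hypersurface $F_i \times \{\tfrac{1}{2}\}$ in $W_i$, which I would take as the first hypersurface of a hierarchy of $W_i$; cutting along it recovers $W_{i+1}$ with its inductively constructed hierarchy.

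The key technical obstacle is that the construction as just described in fact produces a 4-manifold whose \emph{full} boundary is $M \sqcup M$, not $M$ alone: both the ``bottom'' faces $C_j \times \{0\}$ and the ``top'' faces $C_j \times \{1\}$ assemble to copies of $M$ after all side-face identifications have been made. Ensuring $\partial W = M$ requires a more delicate choice of base-case 4-cells so that the ``top'' faces are absorbed into the interior of $W$ in the course of the construction; this can potentially be arranged by using other available Haken 4-cells, such as higher-dimensional polytope analogs of Haken 3-cells that have $C_j$ as a distinguished face and whose remaining boundary faces are paired up for interior absorption as adjacent 4-cells are glued.

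The other principal obstacles are verifying that the inductively constructed boundary pattern on $W_i$ is \emph{useful} in the sense of Foozwell and Rubinstein---in particular the $\pi_1$-injectivity of each face, which I would expect to follow from a van Kampen argument expressing $\pi_1(W_i)$ as an HNN extension of $\pi_1(W_{i+1})$ along the amalgamating subgroup $\pi_1(F_i)$---and confirming that the constructed hierarchy does terminate in Haken 4-cells. Of these, the duplicated-boundary issue is the most delicate and is likely the crux of the argument.
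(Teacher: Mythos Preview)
The paper does not actually prove this theorem. It is listed in Section~\ref{sec:facts} among ``Recent results about Haken $n$-manifolds'' and is simply quoted, with attribution and citation, as a result of Foozwell and Rubinstein \cite{FoozwellRubinstein2012}. There is therefore no ``paper's own proof'' to compare against.

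As to your proposal itself: what you have written is an outline with self-identified gaps rather than a proof, and the gap you flag is genuine and serious. Your base-case construction $C_j \times I$ and subsequent gluings produce, as you correctly note, a 4-manifold with boundary $M \sqcup M$; this is essentially $M \times I$ reassembled, and no amount of regluing along the side faces will collapse the second copy of $M$. Your suggestion to use ``other available Haken 4-cells \ldots\ whose remaining boundary faces are paired up for interior absorption'' is where the entire content of the theorem would have to live, and you have not supplied it. Producing a compact 4-manifold with a \emph{single} boundary component equal to a prescribed closed 3-manifold is already a nontrivial cobordism statement (Rokhlin's theorem in the orientable case), and arranging a Haken structure compatible with the given hierarchy on $M$ is a further substantial step. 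The inductive framework you describe is plausible scaffolding, but the actual argument---choosing the right 4-dimensional filling pieces and verifying usefulness of the resulting boundary patterns---is missing, and is precisely what Foozwell and Rubinstein's paper supplies.
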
 

The most fundamental open problem is the following.

\begin{conj}
A homotopy equivalence between closed Haken $n$-manifolds is homotopic to a homeomorphism.
\end{conj}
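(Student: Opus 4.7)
The plan is to mimic Waldhausen's proof of the corresponding theorem for 3-dimensional Haken manifolds, proceeding by induction on the length of a hierarchy. Given a homotopy equivalence $f \colon M \to N$ between closed Haken $n$-manifolds, one would fix a hierarchy $(N_0,G_0),\dots,(N_k,G_k)$ of $N$ and attempt to produce a parallel hierarchy of $M$ along which $f$ can be homotoped to a homeomorphism. As in Waldhausen, I would first strengthen the statement to a relative version: a homotopy equivalence of pairs between Haken manifolds with complete useful boundary patterns, sending faces to faces in a prescribed combinatorial fashion, is homotopic rel structure to a homeomorphism. This is necessary because the manifolds arising after even one cut have nonempty boundary with a nontrivial boundary pattern.

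The inductive step would proceed in the following order. First, homotope $f$ so that it is transverse to $G_0$, and analyze the preimage $f^{-1}(G_0)$. Using the $\pi_1$-injectivity and asphericity built into the definition of a boundary pattern, and the inductive control on boundary faces, one would seek to simplify $f^{-1}(G_0)$ to a union of essential hypersurfaces in $M$. Second, use one such component as the first hypersurface $F_0$ of a candidate hierarchy for $M$, so that cutting $M$ along $F_0$ and $N$ along $G_0$ yields Haken manifolds with shorter hierarchies and an induced homotopy equivalence of pairs. Third, apply the inductive hypothesis to obtain a homeomorphism of the cut-open pieces. Fourth, re-glue: verify that the homeomorphism carries the two copies of $F_0$ compatibly to the two copies of $G_0$, so that it descends to a homeomorphism $M \to N$ homotopic to $f$. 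The base case is a homotopy equivalence of pairs between Haken $n$-cells, which should reduce, via an Alexander-trick-type argument on each face inductively, to the assertion that the combinatorial structure of the boundary complex of a Haken $n$-cell determines it uniquely up to homeomorphism; Theorem \ref{thm:boundaryflag} and its higher-dimensional generalization are relevant here.

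The main obstacle, and the reason the conjecture remains open, is the replacement of Waldhausen's use of the loop theorem and Dehn's lemma in dimension three. In higher dimensions there is no direct analog for controlling $f^{-1}(G_0)$ or for recognizing when a preimage component is essential, and the tools of topological transversality in dimension four are subtle. A secondary obstacle is the base case: the rigidity of Haken $n$-cells as manifolds-with-boundary-pattern is a genuine question involving the combinatorial topology of flag simplicial $(n-1)$-spheres, and is already nontrivial for $n=4$. A third, technical but genuine, obstacle is the re-gluing step, which in dimension three depends on the Nielsen-type fact that homotopy equivalences of surfaces are determined on the boundary up to isotopy; the corresponding rigidity for homotopy equivalences of Haken $(n-1)$-manifolds restricted to faces is effectively the conjecture one dimension down, so the induction has to be organized with care to avoid circularity. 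For these reasons I would expect any attack to proceed first in the smooth or PL category and to handle the base case via the duality theory of Section \ref{sec:duality}, leaving the topological category and the gluing step as separate issues.
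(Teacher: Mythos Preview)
The paper does not prove this statement: it is explicitly presented as a \emph{conjecture}, introduced with the phrase ``The most fundamental open problem is the following.'' There is therefore no proof in the paper to compare your proposal against.

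Your proposal is not a proof but a plan of attack, and you acknowledge this yourself when you write ``the reason the conjecture remains open.'' The outline you give---induct on hierarchy length, make $f$ transverse to the first cutting hypersurface, simplify the preimage, cut and recurse, then reglue---is indeed the natural generalization of Waldhausen's argument, and the obstacles you identify (no loop theorem/Dehn's lemma in higher dimensions, rigidity of Haken $n$-cells, and the circularity risk in the regluing step) are precisely the reasons this remains a conjecture rather than a theorem. So your write-up is a reasonable discussion of the problem, but it should not be labeled a proof proposal: none of the steps you list is actually carried out, and you correctly flag each one as open.
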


\section{Approach to the Sign Conjecture for Haken 4-Manifolds}
\label{sec:approach}\label{sec:approach}
We establish here a framework suitable for proving the Euler characteristic inequality for Haken $4$-manifolds by induction on the length of a hierarchy. In particular we develop an appropriate version of the Euler characteristic conjecture for Haken manifolds with boundary.

We seek a function $\varphi$ that assigns to each Haken 4-manifold (and, indeed, any compact 4-manifold with a complete boundary pattern) a real number $\varphi(X^{4})$ depending only on the boundary of the manifold and its boundary pattern such that $\chi(X^{4})\ge \varphi(X^{4})$  for all compact  Haken 4-manifolds.  In particular, if $X^{4}$ is a Haken $4$-cell, then $\chi(X^{4})=1$ and we must require that $\varphi(X^{4})\le 1$ in that case.  We require that $\varphi(X^{4})=0$ when $\partial X^{4}=\emptyset$. In addition, if $G^{3}\subset X^{4}$ is an essential hypersurface in the hierarchy for $X^{4}$ and we set $Y^{4}=X^{4}|G^{3}$ ($X^{4}$ cut open along $G^{3}$. Formally $Y^{4}$ is obtained by removing a product neighborhood $N$ of $G^{3}$ of the form $G^{3}\times I$ from $X^{4}$.), then we require that
\[
\chi(X^{4})=\chi(Y^{4})-\chi(G^{3})\ge \varphi(Y^{4}) - \chi(G^{3}) \ge \varphi(X^{4})
\]
(The equality is a consequence of the standard sum theorem for Euler characteristics, and the first inequality would be a consequence of an induction hypothesis.)

If there is such a function $\varphi$ then an argument by induction on the length of a hierarchy implies that $\chi(X^{4})\ge \varphi(X^{4})$ for all Haken 4-manifolds.

Thus, in summary, we require
\begin{enumerate}
\item $\varphi(X^4)\le 1$ whenever $X^4$ is a Haken 4-cell.
\item $\varphi(X^4) =0 $ whenever $\partial X^4 = \emptyset$.
\item $\chi(X^{4})\ge \varphi(X^{4})$ whenever we can explicitly calculate both quantities.
\item $ \varphi(Y^{4}) - \chi(G^{3}) \ge \varphi(X^{4})$ whenever $G^{3}\subset X^{4}$ is an essential hypersurface in $X^{4}$ and $Y^{4}=X^{4}|G^{3}$.
\end{enumerate}

If $X^{n}$ is a Haken $n$-manifold, then $f_{n-1}(X^{n})$ denotes the number of facets, and, more generally, $f_{k}(X^{n})$ for $k< n$ denotes the number of $k$-dimensional elements of the complex of  intersections of the facets. 
We let $F^{k}<X^{4}$ denote a $k$-face of the boundary complex of $X^{4}$, $f_{k}(X^{4})$ denote the number of $k$-faces of the boundary complex of $X^{4}$, and $b_{k}(\partial X^{4})$ denote the $k$th betti number of the boundary, i.e., the dimension of $H_{k}(\partial X^{4};\mathbb{Q})$.

We will suppose that $\varphi$ takes the following  general form:
\[
\varphi(X^{4})=\sum_{k=0}^{3}r_{k}f_{k}(X^{4}) + \sum_{k=0}^{3}s_{k}\sum_{F^{k}<X^{4}}\chi(F^{k}) +\sum_{k=0}^{3}t_{k}b_{k}(\partial X^{4})
\] 
and then attempt to deduce restrictions on what the twelve coefficients might be.  The as yet unknown constant coefficients are not required to be integers.

\subsubsection*{Notation} When $X$ is a manifold with boundary pattern we will sometimes use $F^{k}X$ to denote the set of $k$-dimensional facets and write $\chi F^{k}X$  for $ \sum_{F^{k}< X}\chi(F^{k})$.

\section{Results}\label{sec:results}
We will prove the following general result about the possible $\varphi$-functions discussed in Section \ref{sec:approach}.

\begin{thm}\label{thm:possibleformula}
If there is a function of Haken 4-manifolds of the form
\[
\varphi(X^{4})=\sum_{k=0}^{3}r_{k}f_{k}(X^{4}) + \sum_{k=0}^{3}s_{k}\sum_{F^{k}<X^{4}}\chi(F^{k}) +\sum_{k=0}^{3}t_{k}b_{k}(\partial X^{4})
\] 
satisfying 
\begin{enumerate}
\item $\varphi(X^4)\le 1$ whenever $X^4$ is a Haken 4-cell.
\item $\chi(X^{4})\ge \varphi(X^{4})$ whenever we can explicitly calculate both quantities.
\item $ \varphi(Y^{4}) - \chi(G^{3}) \ge \varphi(X^{4})$ whenever $G^{3}\subset X^{4}$ is an essential hypersurface in $X^{4}$ and $Y^{4}=X^{4}|G^{3}$.
\end{enumerate}
then it can be expressed in the form
\[
\varphi(X^{4})=-\frac{1}{16}f_{0}(X^{4})  +\frac{1}{4} \sum_{F^{3}<X^{4}}\chi(F^{3}).
\] 
\end{thm}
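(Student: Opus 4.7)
The plan is to treat conditions (1)--(3) as a linear system in the twelve unknown coefficients $r_{k}, s_{k}, t_{k}$ ($k=0,1,2,3$) and to solve it, modulo identities like the boundary-complex inclusion--exclusion relation $0 = \chi(\partial X^{4}) = \sum_{k=0}^{3}(-1)^{3-k}\sum_{F^{k}<X^{4}}\chi(F^{k})$ (which holds because $\partial X^{4}$ is a closed 3-manifold) that render any such $\varphi$-function expressible in several equivalent forms.  Condition (3), the cutting transformation law, is by far the richest source of equations: it must hold for every admissible essential hypersurface $G^{3}\subset X^{4}$, and any $\varphi$ yielding a useful inductive bound has to be \emph{tight} in generic cuttings, forcing the inequality in (3) to become an equality.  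Each combinatorial type of cut then contributes a linear equation on the coefficients.

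First I would compute the transformation of each ansatz term under cutting $X^{4}$ along $G^{3}$ to form $Y^{4}=X^{4}|G^{3}$, namely the differences $\Delta f_{k}$, $\Delta\sum\chi(F^{k})$, and $\Delta b_{k}(\partial X^{4})$.  Cutting introduces two copies of $G^{3}$ as new 3-faces, creates new lower-dimensional faces along $G^{3}\cap(\text{boundary complex of }X^{4})$, and surgers the boundary 3-manifold $\partial X^{4}$ along $\partial G^{3}$, so the betti numbers $b_{k}(\partial Y^{4})$ are related to those of $\partial X^{4}$ by a Mayer--Vietoris calculation.  The cleanest subcase is a closed interior hypersurface disjoint from the old boundary: here only $\Delta f_{3}=2$ and $\Delta\sum\chi(F^{3})=2\chi(G^{3})$ change among the face data, while $\partial Y^{4}=\partial X^{4}\sqcup (G^{3}\sqcup G^{3})$.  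Demanding $\Delta\varphi+\chi(G^{3})=0$ for all closed Haken 3-manifolds $G^{3}$ already forces strong relations, in particular pinning down $s_{3}$ against the $t_{k}$'s.

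Next I would specialize to the explicit test cases available through condition (2)---the products $T_{g}\times I^{2}$ for surfaces $T_{g}$ of varying genus $g\geq 1$, the products $G^{3}\times I$ for closed Haken 3-manifolds $G^{3}$, and the 4-cube $I^{4}$, which as the simplest Haken 4-cell should saturate condition (1) so that $\varphi(I^{4})=1$.  Varying the genus parameter $g$ gives a one-parameter family of equations that separates pure face-count contributions from Euler-characteristic-weighted ones; combined with the interior-cut equations and the sharpness on $I^{4}$, this should reduce the twelve unknowns to $r_{0}=-\tfrac{1}{16}$, $s_{3}=\tfrac{1}{4}$, all others zero, consistent with the check $-\tfrac{16}{16}+\tfrac{1}{4}\cdot 8=1=\chi(I^{4})$.

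The main obstacle will be the combinatorial bookkeeping under cutting, especially controlling the boundary betti numbers $b_{k}(\partial X^{4})$ as $G^{3}$ is introduced: whereas face counts and Euler-characteristic sums transform locally, the betti numbers of the surgered boundary $\partial Y^{4}$ depend globally on how $\partial G^{3}$ embeds in $\partial X^{4}$, so the $t_{k}$ coefficients will require the most delicate analysis to eliminate.  One must also take care that the finite collection of linear equations extracted is rich enough to pin down the coefficients uniquely modulo the inclusion--exclusion identity above, rather than merely constrain them; this is ultimately what justifies writing the conclusion as ``can be expressed in the form'' rather than as a strict uniqueness statement.
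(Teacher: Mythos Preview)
Your overall strategy---treat the coefficients as unknowns and constrain them via test cases and the cutting law---is the paper's strategy too, but you have a genuine gap in how you extract equations from the inequalities (1) and (3).

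You assert that ``any $\varphi$ yielding a useful inductive bound has to be \emph{tight} in generic cuttings, forcing the inequality in (3) to become an equality,'' and similarly that $I^{4}$ ``should saturate condition (1) so that $\varphi(I^{4})=1$.'' Neither is a hypothesis of the theorem, and neither is true a priori. The paper never assumes tightness. Instead it exploits the full sandwich
\[
\chi(X^{4})=\chi(Y^{4})-\chi(G^{3})\ \ge\ \varphi(Y^{4})-\chi(G^{3})\ \ge\ \varphi(X^{4}),
\]
and only in cases where the two ends happen to coincide (e.g.\ $X^{4}=G^{3}\times S^{1}$ with $G^{3}$ closed, where both ends are $0$) does one get an equality. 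In all other test cases one obtains only a one-sided inequality, and the determination of the coefficients comes from showing that the accumulated \emph{system of inequalities} has a unique solution. For instance, $\varphi(I^{4})\le 1$ gives only $r_{0}+2r_{1}\le -\tfrac{1}{16}$; the matching bound in the other direction, $2r_{0}+3r_{1}\ge -\tfrac{1}{8}$, comes from applying (3) with $G^{3}$ a Haken $3$-cell, and together with $r_{1}\ge 0$ (obtained from yet another cut) these three inequalities force $r_{0}=-\tfrac{1}{16}$, $r_{1}=0$. Your linear-system-of-equalities picture misses this mechanism entirely.

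Your menu of test cases is also not quite the right one. The paper uses $G^{3}\times S^{1}$ (not $G^{3}\times I$) for closed $G^{3}$ to kill $t_{1}$ and tie $t_{0}$ to $r_{3}$; then $G^{3}\times S^{1}$ with $\partial G^{3}=T_{g}$ to pin $s_{3}=\tfrac14$ and $r_{2}+r_{3}=0$; then $T_{g}\times I^{2}$ and a specific cut along $S^{1}\times I^{2}$ to get $r_{2}\le 0$ and $r_{1}+r_{2}\ge 0$; then the general transformation law applied to $G^{3}$ with $\partial G^{3}=T_{g}$ to force $r_{2}=0$; and only at the very end the $4$-cube and Haken $3$-cells. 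The order matters because each step uses the simplifications from the previous ones. Your ``closed interior hypersurface disjoint from the old boundary'' case, while clean, is not enough on its own to separate the $r_{k}$'s from one another.
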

After showing that this is the only possible $\varphi$-function, we derive two fundamental facts which will allow us to prove our main result.

\begin{prop}\label{prop:transformationlaw}
Let $\varphi$ be a function defined for Haken 4-manifolds by the formula 
\[
\varphi(X^{4})=-\frac{1}{16}f_{0}(X^{4}) + \frac{1}{4} \sum_{F^{3}<X^{4}}\chi(F^{3}).
\]
Let $X^{4}$ be a Haken $4$-manifold, let $G^{3}\subset X^{4}$ be a connected essential hypersurface, and let $Y^{4}=X^{4}| G^{3}$, with the induced boundary pattern. Then
\[
\varphi(Y^{4}) 
=  \varphi(X^{4}) -\frac18f_{0}(G^{3})  
 + \frac14\left(\chi (F^{2}G^{3})+\chi (\partial G^{3})  \right) .
\]
\end{prop}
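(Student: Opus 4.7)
The plan is to compute $\varphi(Y^{4})-\varphi(X^{4})$ by analyzing separately how the two quantities $f_{0}(X^{4})$ and $\sum_{F^{3}<X^{4}}\chi(F^{3})$ change under cutting along $G^{3}$, and then matching the result against the stated formula.

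First I would handle the 3-face term. The 3-faces of $Y^{4}$ are of two kinds: two duplicate copies of $G^{3}$, contributing $2\chi(G^{3})$; and the pieces obtained by cutting each original 3-face $F^{3}<X^{4}$ along the properly embedded 2-submanifold $F^{3}\cap G^{3}$. A standard Mayer--Vietoris/inclusion--exclusion computation (writing $F^{3}$ as the union of the cut-open piece and a collar $(F^{3}\cap G^{3})\times I$) gives
\[
\chi\bigl(F^{3}\,|\,F^{3}\cap G^{3}\bigr)=\chi(F^{3})+\chi(F^{3}\cap G^{3}).
\]
Summing over $F^{3}<X^{4}$ and identifying the 2-faces of $G^{3}$'s induced boundary pattern with the components of the various $G^{3}\cap F^{3}$, the cross-term totals $\chi(F^{2}G^{3})$. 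Combined with the two copies of $G^{3}$ and the identity $2\chi(G^{3})=\chi(\partial G^{3})$ (which holds for any compact odd-dimensional manifold), I obtain
\[
\sum_{F^{3}<Y^{4}}\chi(F^{3})-\sum_{F^{3}<X^{4}}\chi(F^{3})=\chi(F^{2}G^{3})+\chi(\partial G^{3}).
\]

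Next I would count the 0-faces. By transversality of $G^{3}$ to the full boundary complex of $X^{4}$, the surface $\partial G^{3}\subset\partial X^{4}$ misses the 0-faces of $X^{4}$, so each such 0-face survives unchanged in $Y^{4}$. The new 0-faces of $Y^{4}$ correspond exactly to the 0-faces of the induced boundary complex of $G^{3}$ (points of $\partial G^{3}\cap F^{1}(X^{4})$): at each such point four 3-faces of $Y^{4}$ meet, namely one copy of $G^{3}$ together with the three local pieces of the $X^{4}$-faces on that side, and the symmetric situation occurs on the other side. Hence every 0-face of $G^{3}$ produces two 0-faces of $Y^{4}$, giving
\[
f_{0}(Y^{4})=f_{0}(X^{4})+2f_{0}(G^{3}).
\]

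Plugging both differences into the defining formula for $\varphi$ yields precisely
\[
\varphi(Y^{4})-\varphi(X^{4})=-\frac{1}{8}f_{0}(G^{3})+\frac{1}{4}\bigl(\chi(F^{2}G^{3})+\chi(\partial G^{3})\bigr),
\]
which is the claimed transformation law.

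The one step that requires genuine care, and which I expect to be the main obstacle, is the count $f_{0}(Y^{4})=f_{0}(X^{4})+2f_{0}(G^{3})$: one must verify that the ``generic transversality'' picture used above really does describe how the boundary complex of $X^{4}$ meets the essential hypersurface $G^{3}$ inside a Haken hierarchy, and that no 0-face of $X^{4}$ sits on $\partial G^{3}$ (nor does $\partial G^{3}$ acquire a 0-face from anywhere other than the intersections $\partial G^{3}\cap F^{1}(X^{4})$). This is a combinatorial bookkeeping question about the useful boundary pattern structure, and I would resolve it by invoking the definitions of hypersurface and induced boundary pattern from Foozwell--Rubinstein, checking the local link model of $X^{4}$ at each stratum as in the 4-cube example $I^{4}$.
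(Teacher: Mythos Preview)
Your proposal is correct and follows essentially the same route as the paper: the paper also establishes $f_{0}(Y^{4})=f_{0}(X^{4})+2f_{0}(G^{3})$ and $\chi(F^{3}Y^{4})=\chi(F^{3}X^{4})+\chi(F^{2}G^{3})+2\chi(G^{3})$ (the latter via the same inclusion--exclusion reasoning you describe), then substitutes into the formula for $\varphi$ and uses $2\chi(G^{3})=\chi(\partial G^{3})$. Your caveat about the vertex count is handled in the paper simply by asserting that the vertices of $G^{3}$ are exactly the points where $\partial G^{3}$ meets $F^{1}X^{4}$, which is part of the setup for an essential hypersurface transverse to the boundary complex.
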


\begin{prop}\label{prop:haken4cell}
Let $\varphi$ be a function defined for Haken 4-manifolds by the formula 
\[
\varphi(X^{4})=-\frac{1}{16}f_{0}(X^{4}) + \frac{1}{4} \sum_{F^{3}<X^{4}}\chi(F^{3}).
\]
If $X^{4}$ is a Haken 4-cell, then  $\varphi(X^{4})\le 1$.
\end{prop}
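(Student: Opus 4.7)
The plan is to reduce $\varphi(X^4) \le 1$ for a Haken 4-cell to the 3-dimensional Charney--Davis conjecture and then invoke the theorem of Davis and Okun. First, since each 3-face $F^3 < X^4$ of a Haken 4-cell is itself a Haken 3-cell and hence contractible, $\chi(F^3)=1$, so $\sum_{F^3<X^4}\chi(F^3) = f_3(X^4)$ and
\[
\varphi(X^4) = -\frac{1}{16}f_0(X^4) + \frac{1}{4}f_3(X^4).
\]
Thus $\varphi(X^4)\le 1$ is equivalent to the purely combinatorial bound $4f_3(X^4) - f_0(X^4) \le 16$.

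Next I would invoke Theorem \ref{thm:boundaryflag} to pass to the dual flag simplicial 3-sphere $K$. Under this duality the 3-faces of $X^4$ correspond to the vertices of $K$ and the 0-faces to the tetrahedra, so $f_0(K) = f_3(X^4)$ and $f_3(K) = f_0(X^4)$, and the target inequality becomes $4f_0(K) - f_3(K) \le 16$. Applying Dehn--Sommerville for simplicial 3-spheres (which gives $f_3(K) = f_1(K) - f_0(K)$), this rewrites as $f_1(K) \ge 5f_0(K) - 16$. A short $h$-vector calculation identifies the deficit as exactly the top $\gamma$-vector entry
\[
\gamma_2(K) = f_1(K) - 5f_0(K) + 16 = h_0 - h_1 + h_2 - h_3 + h_4,
\]
so the inequality is precisely the assertion that $\gamma_2(K) \ge 0$, i.e.\ the Charney--Davis inequality for the flag 3-sphere $K$.

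The main, and essentially only, non-trivial input is then the Davis--Okun theorem \cite{DavisOkun2001}, which establishes the Charney--Davis conjecture for every flag simplicial 3-sphere; everything else is routine manipulation of the $f$- and $h$-vectors of a 3-sphere. As a sanity check, equality is realized by the 4-cube $X^4 = I^4$ (a Haken 4-cell), whose boundary is combinatorially dual to the boundary of the 4-dimensional cross-polytope, the join $C_4 * C_4$ of two 4-cycles, with $f_0(K)=8$ and $f_1(K)=24$, giving $\gamma_2(K) = 0$ and $\varphi(X^4) = -\frac{1}{16}\cdot 16 + \frac{1}{4}\cdot 8 = 1$.
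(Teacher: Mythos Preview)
Your proof is correct and follows essentially the same route as the paper: reduce to $-\tfrac{1}{16}f_0+\tfrac{1}{4}f_3\le 1$, dualize via Theorem~\ref{thm:boundaryflag} to a flag simplicial $3$-sphere, use the Dehn--Sommerville relations to rewrite the inequality as $f_1^*\ge 5f_0^*-16$, and invoke Davis--Okun. The only differences are cosmetic---your explicit identification of the Charney--Davis quantity with $\gamma_2(K)=h_0-h_1+h_2-h_3+h_4$ and the $4$-cube sanity check are nice additions but not in the paper's argument.
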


Here is the full statement of our positive result about the Euler Characteristic Sign Conjecture.
\begin{thm}\label{thm:main}
If $X^{4}$ is a Haken 4-manifold, then
\[
\chi(X^{4})\ge -\frac{1}{16}f_{0}(X^{4})  +\frac{1}{4} \sum_{F^{3}<X^{4}}\chi(F^{3}).
\] 
\end{thm}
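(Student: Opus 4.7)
My plan is to induct on the length of a hierarchy for $X^4$, using the three ingredients from Section \ref{sec:results}. The base case is the setting in which $X^4$ is already a disjoint union of Haken 4-cells. Each such cell $C$ has $\chi(C)=1$; Proposition \ref{prop:haken4cell} gives $\varphi(C)\le 1$. Since both $\chi$ and the ingredients $f_0$ and $\chi(F^3)$ defining $\varphi$ are additive under disjoint union, the desired inequality holds cell-by-cell.

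For the inductive step, let the hierarchy for $X^4$ begin with a cutting hypersurface $G^3=F_0$, and set $Y^4=X^4\,|\,G^3$. Then $Y^4$ inherits a hierarchy one step shorter, so inductively $\chi(Y^4)\ge\varphi(Y^4)$. The standard product-neighborhood sum formula gives $\chi(X^4)=\chi(Y^4)-\chi(G^3)$. Substituting the transformation law from Proposition \ref{prop:transformationlaw} produces
\[
\chi(X^4)\ge\varphi(X^4)+R(G^3),
\]
where the remainder
\[
R(G^3):=-\chi(G^3)-\tfrac{1}{8}f_0(G^3)+\tfrac{1}{4}\chi(F^2 G^3)+\tfrac{1}{4}\chi(\partial G^3)
\]
depends only on the hypersurface $G^3$ with its induced boundary pattern. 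The entire theorem is thus reduced to verifying $R(G^3)\ge 0$ for every essential hypersurface $G^3$ that can arise.

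This last step is the heart of the proof. The hypersurface $G^3$ is itself an aspherical Haken 3-manifold, so I would split into two cases. When $G^3$ is closed, every term of $R(G^3)$ vanishes individually. When $\partial G^3\ne\emptyset$, Poincaré-Lefschetz duality in odd dimensions gives $\chi(G^3)=\tfrac{1}{2}\chi(\partial G^3)$, and the inequality becomes a purely combinatorial statement about the boundary complex of $G^3$. I would then apply inclusion-exclusion over the face complex, together with the rigid local incidence structure forced by a useful boundary pattern in dimension 3 --- each 1-face lies in exactly two 2-faces, each 0-face in exactly three --- and count arcs and circles among the 1-faces via vertex-edge incidences.

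The main obstacle is verifying $R(G^3)\ge 0$: the coefficients $-\tfrac{1}{16}$ and $\tfrac{1}{4}$ in the formula for $\varphi$ are precisely those that make $R(G^3)$ vanish identically (equality, not strict inequality), so there is no slack in the argument. This delicate cancellation --- combining odd-dimensional Poincaré-Lefschetz duality with the vertex-link combinatorics of a Haken boundary pattern --- is exactly why Theorem \ref{thm:possibleformula} picks out this particular $\varphi$ and not some perturbation of it, and is the feature on which the entire induction rests.
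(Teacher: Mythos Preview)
Your proposal is correct and follows essentially the same route as the paper: induction on hierarchy length, base case via Proposition~\ref{prop:haken4cell}, inductive step via the sum formula and Proposition~\ref{prop:transformationlaw}, and the verification that the remainder $R(G^3)$ vanishes identically using $\chi(G^3)=\tfrac12\chi(\partial G^3)$, the inclusion--exclusion identity $\chi(\partial G^3)=\chi F^0-\chi F^1+\chi F^2$, and the vertex--edge incidence relation $2\chi F^1(G^3)=3f_0(G^3)$. The paper's proof is exactly this computation written out line by line, confirming your observation that $R(G^3)=0$ with no slack.
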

\begin{proof}[Proof of Theorem \ref{thm:main} using Propositions \ref{prop:transformationlaw} and \ref{prop:haken4cell}]
We will prove this by induction on the length of a hierarchy. 
At the initial stage of the induction we need to verify the inequality for a Haken 4-cell.  That is we need
 $-\frac{1}{16}f_{0}(X^{4})+ \frac{1}{4} \sum_{F^{3}<X^{4}}\chi(F^{3})\le 1$.
But this is exactly the conclusion of Proposition \ref{prop:haken4cell}.

Second we need to consider the case where $Y^{4}=X^{4}|G^{3}$, where $G^{3}$ is the initial cutting hypersurface in the hierarchy of $X^{4}$.
We use induction on the length of a hierarchy, together with the earlier analysis to obtain
\begin{align*}
\chi(X^{4}) &= \chi(Y^{4}) - \chi(G^{3}) \text{ by the sum formula for the Euler characteristic}
\\
&\ge \varphi(Y^{4}) - \chi(G^{3}) \text{ by the inductive hypothesis}
\\
&=\varphi(X^{4})   -\frac{1}{8}f_{0}(G^{3})  
 + \frac{1}{4}\left(\chi (F^{2}G^{3})+\chi \partial (G^{3})  \right) -\chi (G^{3})
 \text{ by Proposition \ref{prop:transformationlaw}}
\\
&= \varphi(X^{4})   -\frac{1}{8}f_{0}(G^{3})  
 + \frac{1}{4}\left(\chi F^{2}(G^{3})+\chi \partial (G^{3})  \right) - \frac{1}{2}\partial \chi (G^{3}) \text{ since } \chi (G^{3}) = \frac{1}{2}\partial \chi (G^{3})
\\
 &= \varphi(X^{4}) 
 -\frac{1}{8}f_{0}(G^{3})   + \frac{1}{4}\left(\chi (F^{2}G^{3}) - \chi(\partial G^{3})   \right)
 \\
 &= \varphi(X^{4}) 
 -\frac{1}{8}f_{0}(G^{3})   + \frac{1}{4}\left(\chi (F^{2}G^{3}) - \chi (F^{0}G^{3}) + \chi (F^{1}G^{3}) - \chi (F^{2}G^{3})   \right)
\\
 &= \varphi(X^{4}) 
 -\frac{1}{8}f_{0}(G^{3})   + \frac{1}{4}\left(- \chi (F^{0}G^{3}) + \chi (F^{1}G^{3})  \right)
 \\
 &= \varphi(X^{4}) 
 -\frac{1}{8}f_{0}(G^{3})   + \frac{1}{4}\left(- f_{0}(G^{3}) + \frac{3}{2}f_{0}(G^{3})  \right) \text{ since } 3f_{0}(G^{3}) = 2\chi (F^{1}G^{3})
 \\
   &= \varphi(X^{4}) 
 + \left(-\frac{1}{8} + \frac{1}{8}\right)f_{0}(G^{3}) 
  \\
    &= \varphi(X^{4})
\end{align*}
\end{proof}

\begin{cor}
If $X^{4}$ is a compact Haken 4-manifold without boundary, then $\chi(X^{4})\ge 0$.
\end{cor}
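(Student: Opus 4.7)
The plan is to deduce the corollary directly from Theorem \ref{thm:main}, which has already been proved. Since Theorem \ref{thm:main} asserts the inequality
\[
\chi(X^{4}) \geq -\frac{1}{16}f_{0}(X^{4}) + \frac{1}{4}\sum_{F^{3}<X^{4}}\chi(F^{3})
\]
for every Haken 4-manifold $X^{4}$, it suffices to observe that the right-hand side vanishes when $\partial X^{4}=\emptyset$.

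First I would note that the entire boundary complex of $X^{4}$ is encoded in $\partial X^{4}$ together with its boundary pattern. When the boundary is empty there are no facets $F^{3}$, no lower-dimensional intersections, and in particular no vertices in the boundary complex, so by definition both $f_{0}(X^{4})=0$ and the sum $\sum_{F^{3}<X^{4}}\chi(F^{3})$ is an empty sum, hence equal to $0$. This is consistent with requirement (2) in Section \ref{sec:approach}, namely that $\varphi(X^{4})=0$ whenever $\partial X^{4}=\emptyset$, which the specific formula for $\varphi$ in Theorem \ref{thm:main} visibly satisfies.

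Plugging these vanishings into the inequality of Theorem \ref{thm:main} gives $\chi(X^{4})\geq 0$, which is the desired conclusion. There is no real obstacle here: the entire content of the corollary has been absorbed into Theorem \ref{thm:main} and its supporting Propositions \ref{prop:transformationlaw} and \ref{prop:haken4cell}. The only thing to verify is the bookkeeping observation that the boundary terms are zero in the absence of a boundary, which is immediate from the definitions.
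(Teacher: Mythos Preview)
Your proof is correct and follows exactly the same approach as the paper: apply Theorem~\ref{thm:main} and observe that the right-hand side $\varphi(X^{4})$ vanishes when $\partial X^{4}=\emptyset$. The paper states this in a single sentence, while you spell out explicitly why $f_{0}(X^{4})=0$ and the sum over $F^{3}<X^{4}$ is empty, but the content is identical.
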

\begin{proof}
By Theorem \ref{thm:main} we have $\chi(X^{4})\ge \varphi(X^{4})$, which vanishes since $\partial X^{4}=\emptyset$.
\end{proof}

It remains to prove Theorem \ref{thm:possibleformula} and Propositions \ref{prop:transformationlaw} and \ref{prop:haken4cell}, which will occupy the subsequent sections.

\section{Determination of $\varphi$-function and the transformation law}\label{sec:phifunction}

 \subsection{Some general reductions in the 4-dimensional case}
We consider a function of the form
\[
\varphi(X^{4})=\sum_{k=0}^{3}r_{k}f_{k}(X^{4}) + \sum_{k=0}^{3}s_{k}\sum_{F^{k}<X^{4}}\chi(F^{k}) +\sum_{k=0}^{3}t_{k}b_{k}(\partial X^{4})
\] 
satisfying the properties $(1) - (4)$ of Section \ref{sec:approach}.
First of all, note that we may (and shall)  assume that $t_{2}=t_{3}=0$, since $b_{0}=b_{3}$ and $b_{1}=b_{2}$ for any closed oriented 3-manifold.

\begin{lemma}
Suppose that $Z$ has the structure of a ``manifold complex'' (such as given by the boundary pattern of a Haken manifold). Then $\chi(Z)=\sum_{F<Z}(-1)^{\dim F}\chi(F)$.
\end{lemma}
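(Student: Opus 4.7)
The plan is to prove this by induction on the number of faces, building $Z$ one face at a time. Order the faces as $F_1, F_2, \ldots, F_N$ with $\dim F_1 \le \dim F_2 \le \cdots \le \dim F_N$, and set $Z_k = F_1 \cup \cdots \cup F_k$. The base case $Z_0 = \varnothing$ trivially matches the empty sum.

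For the inductive step, the manifold-complex structure ensures that every proper sub-face of $F_k$ already lies in $Z_{k-1}$ (by the dimension ordering), and conversely $F_k$ meets $Z_{k-1}$ only along proper sub-faces. Hence $F_k \cap Z_{k-1}$ is precisely the union of the proper sub-faces of $F_k$, which assembles to the topological manifold boundary $\partial F_k$. Since this inclusion is collared, $Z_k$ realizes a pushout $Z_{k-1} \cup_{\partial F_k} F_k$, and the Mayer--Vietoris sum formula for Euler characteristics gives
\[
\chi(Z_k) = \chi(Z_{k-1}) + \chi(F_k) - \chi(\partial F_k).
\]

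The remaining ingredient is the identity $\chi(F) - \chi(\partial F) = (-1)^{\dim F} \chi(F)$ for any compact manifold with boundary. Working with $\mathbb{Z}/2$ coefficients to sidestep orientability (Euler characteristic is field-independent), Poincar\'e--Lefschetz duality supplies $H_j(F, \partial F) \cong H^{\dim F - j}(F)$, so $\chi(F, \partial F) = (-1)^{\dim F}\chi(F)$, while the long exact sequence of the pair identifies $\chi(F, \partial F)$ with $\chi(F) - \chi(\partial F)$. Substituting yields $\chi(Z_k) - \chi(Z_{k-1}) = (-1)^{\dim F_k}\chi(F_k)$, and telescoping in $k$ delivers the claim.

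The main delicacy --- and essentially the only place the hypothesis on $Z$ is used --- is the identification $F_k \cap Z_{k-1} = \partial F_k$. This requires that faces of $Z$ meet only along lower-dimensional sub-faces and that the sub-faces of each $F$ assemble into all of $\partial F$. Both are built into the usefulness axioms for boundary patterns of Haken manifolds, so no extra work is needed in the intended application.
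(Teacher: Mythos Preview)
Your proof is correct and follows essentially the same induction-on-faces strategy as the paper: add one face $G$ at a time with $G \cap (\text{rest}) = \partial G$, apply the sum formula for Euler characteristics, and reduce to the identity $\chi(G) - \chi(\partial G) = (-1)^{\dim G}\chi(G)$. The only cosmetic difference is that the paper obtains this last identity via the even/odd case split (using $\chi(\partial G)=0$ for even $\dim G$ and $\chi(\partial G)=2\chi(G)$ for odd $\dim G$) instead of invoking Poincar\'e--Lefschetz duality explicitly; one small terminological slip is that the assembly of sub-faces into all of $\partial F$ is the \emph{completeness} condition on the boundary pattern rather than usefulness.
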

\begin{proof}
By induction on the number of faces. The result is  clearly true for a single point. Suppose $Z=Y\cup G$, where $G\cap Y = \partial G$. By the sum formula for the Euler characteristic we have 
\begin{align*}
\chi(Z)&=\chi(Y)+\chi(G)-\chi(G\cap Y)\\
&=\chi(Y)+\chi(G)-\chi(\partial G)\\
&=\sum_{F<Y}(-1)^{\dim F}\chi(F)+\chi(G)-\chi(\partial G) \quad\text{ by induction.}
\end{align*}
If $\dim G$ is even, then $\chi(\partial G)=0$ and we obtain the desired formula for $X$. If $\dim G$ is odd, then $\chi(G)-\chi(\partial G)=-\chi(G)$ and we again obtain the desired formula. 
\end{proof}
\begin{cor}
If $\varphi$ corresponds to the 4-tuple $(s_{0},s_{1},s_{2},s_{3})$ and $\varphi'$ to the 4-tuple given by $s_{k}'=s_{k}+(-1)^{k}p, k=0,\dots, 3$, for  any number $p$ (and all other coefficients are the same), then $\varphi'=\varphi$.
\end{cor}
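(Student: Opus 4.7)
The plan is to observe that the difference $\varphi'(X^{4}) - \varphi(X^{4})$ reduces to a single application of the preceding lemma. By construction,
\[
\varphi'(X^{4}) - \varphi(X^{4}) = \sum_{k=0}^{3}(-1)^{k}p\sum_{F^{k}<X^{4}}\chi(F^{k}) = p\sum_{k=0}^{3}(-1)^{k}\sum_{F^{k}<X^{4}}\chi(F^{k}),
\]
since $\varphi$ and $\varphi'$ agree in all $r_{k}$ and $t_{k}$ coefficients. So the task reduces to identifying the sum on the right.

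The key step is to apply the preceding lemma to $Z = \partial X^{4}$, endowed with the manifold complex structure coming from the boundary pattern of $X^{4}$. The faces of this manifold complex are precisely the facets $F^{k} < X^{4}$ together with all their intersections, so
\[
\sum_{k=0}^{3}(-1)^{k}\sum_{F^{k}<X^{4}}\chi(F^{k}) = \sum_{F<\partial X^{4}}(-1)^{\dim F}\chi(F) = \chi(\partial X^{4}).
\]
Because $\partial X^{4}$ is a closed odd-dimensional manifold (a closed $3$-manifold), Poincar\'e duality forces $\chi(\partial X^{4}) = 0$.

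Combining, $\varphi'(X^{4}) - \varphi(X^{4}) = p\cdot \chi(\partial X^{4}) = 0$, so $\varphi' = \varphi$ as claimed. There is essentially no obstacle here beyond correctly identifying $\partial X^{4}$ with the manifold complex hypothesized in the preceding lemma; the vanishing of $\chi(\partial X^{4})$ is standard. This corollary is what lets one normalize the $s_{k}$'s (for instance, by adjusting $p$ to kill one of the four coefficients) in the subsequent determination of $\varphi$.
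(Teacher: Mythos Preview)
Your proof is correct and follows essentially the same route as the paper: compute the difference $\varphi'-\varphi$, recognize it via the preceding lemma as $p\cdot\chi(\partial X^{4})$, and use that the boundary of a compact $4$-manifold has vanishing Euler characteristic. The paper states this more tersely, but the argument is the same.
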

\begin{proof}
$\varphi'(X^{4})$ differs from $\varphi(X^{4})$ by a multiple of $\chi(\partial X^{4})$.
But the boundary of a compact 4-manifold always has Euler characteristic 0.
\end{proof}

\begin{cor}\label{cor:s2}
If $X^{4}$ is a Haken 4-manifold, then $$\sum_{k=0}^{3}(-1)^{k}\sum_{F^{k}<X^{4}}\chi(F^{k}) = 0.$$
\end{cor}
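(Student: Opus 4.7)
The plan is to recognize the sum as $\chi(\partial X^{4})$ via the preceding lemma and then invoke the fact that a closed odd-dimensional manifold has vanishing Euler characteristic.

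More concretely, the boundary pattern of $X^{4}$ makes $\partial X^{4}$ into a manifold complex whose faces are exactly the $F^{k} < X^{4}$ for $k = 0, 1, 2, 3$. Applying the lemma to $Z = \partial X^{4}$ gives
\[
\chi(\partial X^{4}) \;=\; \sum_{F < \partial X^{4}} (-1)^{\dim F}\chi(F) \;=\; \sum_{k=0}^{3}(-1)^{k}\sum_{F^{k}<X^{4}}\chi(F^{k}).
\]
Since $X^{4}$ is a compact 4-manifold, $\partial X^{4}$ is a closed (possibly empty) 3-manifold, so $\chi(\partial X^{4}) = 0$, which yields the desired identity.

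There is essentially no obstacle here: the only thing to check is that the indexing convention really matches, i.e.\ that every face of the manifold complex $\partial X^{4}$ appears exactly once as some $F^{k} < X^{4}$. This is built into the definition of the boundary pattern and its induced boundary complex, so the verification is immediate.
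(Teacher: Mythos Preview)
Your proof is correct and follows essentially the same approach as the paper: identify the alternating sum as $\chi(\partial X^{4})$ via the preceding lemma, and then use that the boundary of a compact 4-manifold is a closed 3-manifold with vanishing Euler characteristic. The paper's own proof is just a one-line version of what you wrote.
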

\begin{proof}
The sum represents the Euler characteristic of the 3-manifold $\partial X^{4}$ and hence vanishes.
\end{proof}

\begin{lemma}\label{lemma:s1}
If $X^{4}$ is a Haken 4-manifold, then $$2f_{0}(X^{4})=2\sum_{F^{0}<X^{4}}\chi(F^{0})=\sum_{F^{1}<X^{4}}\chi(F^{1}).$$
\end{lemma}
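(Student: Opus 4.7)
The plan is to handle the two equalities separately. The first, $2f_0(X^4) = 2\sum_{F^0<X^4}\chi(F^0)$, is immediate: each 0-face is a single point, so $\chi(F^0)=1$ and $\sum_{F^0<X^4}\chi(F^0)=f_0(X^4)$. The content is in the second equality $2f_0(X^4) = \sum_{F^1<X^4}\chi(F^1)$, which I will prove by a double-counting argument on incidences between 0-faces and 1-faces of the boundary complex.

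The first ingredient I need is the local claim that every 0-face $v$ is incident to exactly four 1-faces, counted with endpoint multiplicity. The boundary-pattern axiom for a 4-manifold states that a $k$-fold facet intersection is a disjoint union of $(4-k)$-manifolds, and is empty for $k\geq 5$. Thus $v$, being 0-dimensional, must be an isolated component of the intersection of exactly four facets $F^3_1,\dots,F^3_4$. Each of the $\binom{4}{3}=4$ triples drawn from these four facets gives a 1-face whose underlying 1-manifold has $v$ as a boundary endpoint, and these four local incidences are distinct: if the same local arc-end arose from two different triples, the 4-fold intersection $F^3_1\cap\cdots\cap F^3_4$ would contain a 1-dimensional neighborhood of $v$, contradicting the 0-dimensionality of the 4-fold intersection.

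The second ingredient is that each 1-face $F^1$ is a connected compact 1-manifold, hence either a circle (with $\chi(F^1)=0$ and no boundary endpoints) or an arc (with $\chi(F^1)=1$ and two boundary endpoints). In either case, the number of endpoint incidences contributed by $F^1$ equals $2\chi(F^1)$. Counting the set of incidence pairs $\{(v,e) : v \text{ is a 0-face and } e \text{ is a 1-manifold endpoint of some } F^1 \text{ mapping to } v\}$ in two ways then yields $4f_0(X^4) = \sum_{F^1<X^4} 2\chi(F^1)$, which rearranges to the stated identity. The main obstacle is justifying the ``degree $4$'' structure at each 0-face; I expect this to be a routine consequence of the boundary-pattern and usefulness axioms, but one must take care that the endpoint-multiplicity convention correctly handles the a priori possibility of an arc-type 1-face having both endpoints identified to the same 0-face.
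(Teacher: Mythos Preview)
Your argument is correct and is essentially the same double-counting of vertex--edge incidences as the paper's proof: the paper simply asserts that ``every vertex has 4 edges incident at it'' and that $\sum_{F^1}\chi(F^1)$ counts the interval 1-faces, then concludes $4f_0=2\sum_{F^1}\chi(F^1)$. You supply more justification for the degree-4 claim (via the $\binom{4}{3}$ triples of facets through a vertex) than the paper does, but the underlying idea is identical.
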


\begin{proof}
The quantity $\sum_{F^{1}<X^{4}}\chi(F^{1})$ counts the number of 1-faces that are not circles, i.e., are intervals, each with 2 end points. On the other hand every vertex has 4 edges incident at it, so $4\sum_{F^{0}<X^{4}}\chi(F^{0})$ counts the total number of edge ends. It follows that
\[
4\sum_{F^{0}<X^{4}}\chi(F^{0})=2\sum_{F^{1}<X^{4}}\chi(F^{1}).
\]
\end{proof}

\begin{cor}
In the function $\varphi$  we may, without loss of generality assume that $s_{1}=s_{2}=0$.
\end{cor}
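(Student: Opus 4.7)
The plan is to exploit two linear identities, each valid on every Haken 4-manifold, that relate the quantities $\sum_{F^{k}<X^{4}}\chi(F^{k})$ to the count $f_{0}(X^{4})$: Lemma \ref{lemma:s1}, which gives $\sum_{F^{1}<X^{4}}\chi(F^{1})=2f_{0}(X^{4})$, and Corollary \ref{cor:s2}, which is the vanishing of $\chi(\partial X^{4})$. Each identity provides one degree of freedom in the $(s_{0},s_{1},s_{2},s_{3})$-coordinates along which $\varphi$ is unchanged on every Haken 4-manifold, and together they supply the two degrees of freedom needed to zero out $s_{1}$ and $s_{2}$.

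First I would apply Lemma \ref{lemma:s1} directly to rewrite $s_{1}\sum_{F^{1}<X^{4}}\chi(F^{1})$ as $2s_{1}f_{0}(X^{4})$; this sets $s_{1}=0$ and replaces $r_{0}$ by $r_{0}+2s_{1}$, while all other coefficients remain untouched. Next, I would combine Corollary \ref{cor:s2} with the tautology $\sum_{F^{0}<X^{4}}\chi(F^{0})=f_{0}(X^{4})$ and the Lemma \ref{lemma:s1} identity to solve for the $k=2$ summand, obtaining
\[
\sum_{F^{2}<X^{4}}\chi(F^{2})=f_{0}(X^{4})+\sum_{F^{3}<X^{4}}\chi(F^{3}).
\]
Substituting this into the $s_{2}$-term lets us absorb $s_{2}$ by shifting $r_{0}$ by an additional $s_{2}$ and $s_{3}$ by $s_{2}$; crucially, this step does not reintroduce any $s_{1}$-term, so the normalization achieved in step one is preserved. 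The new $\varphi$ has $s_{1}=s_{2}=0$, equals the original $\varphi$ on every Haken 4-manifold, and hence satisfies properties (1)--(4) with the new coefficients.

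I do not anticipate a genuine obstacle here; the argument is purely linear bookkeeping. The only thing worth verifying is that the two identities in hand are linearly independent in the coordinates $(s_{0},s_{1},s_{2},s_{3})$, and this is immediate because Lemma \ref{lemma:s1} involves only the indices $0$ and $1$ while Corollary \ref{cor:s2} genuinely involves the index $3$.
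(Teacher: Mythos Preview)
Your argument is correct and rests on the same two identities the paper uses, Lemma~\ref{lemma:s1} and Corollary~\ref{cor:s2}. The only difference is cosmetic: the paper eliminates $s_{2}$ via the earlier shift corollary $s_{k}\mapsto s_{k}+(-1)^{k}p$ (which is precisely Corollary~\ref{cor:s2}) and then invokes Lemma~\ref{lemma:s1} for $s_{1}$, whereas you first zero $s_{1}$ and then solve explicitly for $\sum_{F^{2}<X^{4}}\chi(F^{2})=f_{0}(X^{4})+\sum_{F^{3}<X^{4}}\chi(F^{3})$ to zero $s_{2}$; these are equivalent linear manipulations.
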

\begin{proof}

By Lemma \ref{lemma:s1} we can replace the term $\sum_{F^{1}<X^{4}}\chi(F^{1})$ by $2f_{0}(X^{4})$ and the term $\sum_{F^{0}<X^{4}}\chi(F^{0})$ by $f_{0}(X^{4})$.
\end{proof}

\begin{lemma}
In the function $\varphi(X^{4})$ we may assume that $s_{0}=0$
\end{lemma}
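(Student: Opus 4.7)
The plan is to exploit the trivial observation that $s_0$ is already redundant with $r_0$ once we understand what a $0$-face of a boundary complex actually is. A $0$-face $F^0 < X^4$ is, by definition, a single point, and $\chi(\text{point}) = 1$. Consequently, for every Haken $4$-manifold $X^4$ we have
\[
\sum_{F^0 < X^4} \chi(F^0) \;=\; f_0(X^4),
\]
so that the contribution of the $s_0$-term to $\varphi(X^4)$ is exactly $s_0 f_0(X^4)$.

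Given this, my argument proceeds in one line: define a new function $\varphi'$ by replacing the coefficient $r_0$ with $r_0' := r_0 + s_0$ and the coefficient $s_0$ with $s_0' := 0$, while leaving all other $r_k$, $s_k$, and $t_k$ unchanged. Because the $s_0$-term and the extra contribution to the $r_0$-term agree pointwise on every Haken $4$-manifold, $\varphi'(X^4) = \varphi(X^4)$ identically. In particular $\varphi'$ automatically inherits properties (1)--(4) of Section \ref{sec:approach}, so we may pass from $\varphi$ to $\varphi'$ without loss of generality.

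There is no real obstacle here: this lemma simply records a syntactic redundancy in the twelve-parameter family of candidate functions, of the same flavor as the reductions already carried out for $t_2, t_3, s_1, s_2$. In combination with the earlier reductions, it leaves us with the much smaller space of $\varphi$-functions in which $s_0 = s_1 = s_2 = t_2 = t_3 = 0$, which is the starting point for the subsequent analysis that pins down the remaining coefficients via test computations on standard Haken $4$-manifolds.
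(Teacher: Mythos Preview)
Your proof is correct and follows exactly the same reasoning as the paper: since each $0$-face is a point with Euler characteristic $1$, the quantity $\sum_{F^{0}<X^{4}}\chi(F^{0})$ equals $f_{0}(X^{4})$, so the $s_{0}$-term may be absorbed into the $r_{0}$-term.
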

\begin{proof}
The quantities $f_{0}(X^{4}) $ and $\sum_{F^{0}<X^{4}}\chi(F^{0})$ are equal and so may be combined into a single term.
\end{proof}

At this stage we may assume our function $\varphi(X^{4})$ has the following form
\[
\varphi(X^{4})=\sum_{k=0}^{3}r_{k}f_{k}(X^{4})+ s_{3}\sum_{F^{3}<X^{4}}\chi(F^{3}) +t_{0}b_{0}(\partial X^{4})+t_{1}b_{1}(\partial X^{4}).
\]

\subsection{Evaluation on some Standard Haken 4-Manifolds}
Here we continue the process of determining restrictions on the coefficients appearing in the formula for the $\varphi$-function by evaluating the formula on a variety of Haken 4-manifolds and cutting hypersurfaces in hierarchies.
\begin{prop}
The coefficients in the function $\varphi$ must satisfy 
\begin{align*}\label{prop:inequalities}
 t_{1}&=0\\
 t_{0}&=-r_{3}\\
r_{3}&=-r_{2}\\
 s_{3}&=1/4\\
 r_{2}&\le 0\\
 r_{3}&\ge 0\\
 r_{1}+r_{2}&\ge 0 \\
 r_{1}&\ge 0.
\end{align*}
\end{prop}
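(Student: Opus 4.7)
The plan is to derive each of the eight listed constraints by evaluating $\varphi$ and $\chi$ on a small collection of standard Haken 4-manifolds and on selected essential cuttings, then invoking properties~(1), (2), (3) of Theorem~\ref{thm:possibleformula}. The pattern throughout is that the inequality $\chi(X^4)\ge\varphi(X^4)$ provides an upper bound on a linear combination of coefficients, while the cutting inequality $\varphi(Y^4)-\chi(G^3)\ge\varphi(X^4)$ provides the matching lower bound, often forcing a linear equation; varying a parameter (genus $g$ or polygon size $p$) then isolates individual coefficients.

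To obtain $t_1=0$ and $t_0=-r_3$, first take $X^4=G^3\times S^1$ for $G^3$ a closed Haken 3-manifold and cut along $G^3\times\{0\}$ to get $Y^4=G^3\times I$. Since $\chi(X^4)=\varphi(X^4)=0$, the two inequalities squeeze $\varphi(Y^4)=2r_3+2t_0+2t_1 b_1(G^3)$ to zero. Choosing $G^3$ of varying first Betti number (e.g.\ $T^3$ and a Haken rational homology 3-sphere) then forces $t_1=0$ and $t_0=-r_3$.

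To obtain $r_3=-r_2$, take $X^4=M^3\times I$ for $M^3$ a Haken 3-manifold with incompressible toral boundary (a nontrivial knot exterior, say); the natural three-face boundary pattern gives $\chi(X^4)=0$ and $\varphi(X^4)=2(r_3+r_2)$ after the substitutions above. Condition~(2) yields $r_3+r_2\le 0$, while cutting along $M^3\times\{1/2\}$ produces two copies of $X^4$, so condition~(3) yields the reverse inequality. To pin down $s_3=1/4$ and obtain $r_2\le 0$, $r_3\ge 0$, next take $X^4=T_g\times I^2$: direct computation yields $\chi(X^4)=2-2g$ and $\varphi(X^4)=3r_3+4r_2+4s_3(2-2g)$. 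Pushing $g\to\infty$ in condition~(2) forces $s_3\ge 1/4$, and cutting along $T_g\times\{1/2\}\times I$ splits $X^4$ into two copies of itself, so pushing $g\to\infty$ in condition~(3) forces $s_3\le 1/4$. At $s_3=1/4$ and $g=1$, condition~(2) reduces to $3r_3+4r_2\le 0$; substituting $r_3=-r_2$ gives $r_2\le 0$ and hence $r_3\ge 0$.

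Finally, to obtain $r_1+r_2\ge 0$, take $X^4=T_g\times P$ with $P$ a $p$-gon, $p\ge 4$, and cut along $G^3=\gamma\times P$ for $\gamma\subset T_g$ a non-separating simple closed curve. The product boundary pattern on $Y^4=(T_g|\gamma)\times P$ has $p+2$ three-faces (the $p$ faces $(T_g|\gamma)\times I$ together with two faces $S^1\times P$), $3p$ two-faces, and $2p$ one-faces; careful bookkeeping yields $\varphi(Y^4)-\varphi(X^4)=2r_3+2p(r_1+r_2)$, and since $\chi(G^3)=0$ the cutting inequality then forces $r_1+r_2\ge 0$ by letting $p\to\infty$. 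The remaining inequality $r_1\ge 0$ is a formal consequence of $r_1+r_2\ge 0$ and $r_2\le 0$. The main technical hurdle will be the careful enumeration of the boundary complex in $(T_g|\gamma)\times P$, including the 2- and 1-dimensional intersections, and verifying in each instance that the chosen hypersurface is genuinely essential in the Haken sense.
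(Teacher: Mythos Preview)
Your argument is correct, and the overall architecture---squeeze $\varphi$ between $\chi$ from above (condition~(2)) and the cutting inequality from below (condition~(3)), then vary a free parameter to isolate coefficients---is exactly the paper's strategy. The opening step for $t_1=0$ and $t_0=-r_3$ is identical to the paper's. After that your choices of test manifolds diverge from the paper's in interesting ways.

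For $r_3=-r_2$ and $s_3=1/4$ the paper uses a single family, $G^3\times S^1$ with $\partial G^3=T_g$, cut along $G^3\times\{\text{pt}\}$; the resulting squeezed equality $2r_2+2r_3+(1-g)(4s_3-1)=0$ yields both relations at once by setting $g=1$ and then $g>1$. You instead obtain $r_3=-r_2$ from the knot-exterior product $M^3\times I$ cut at the midpoint (a clean self-doubling), and then extract $s_3=1/4$ separately by an asymptotic argument on $T_g\times I^2$ using both condition~(2) and a midpoint cut as $g\to\infty$. Both routes are valid; the paper's is more economical, while yours keeps the two conclusions logically independent. Note incidentally that your midpoint cut of $T_g\times I^2$ actually forces $\varphi(T_g\times I^2)=2-2g$ exactly, which already gives $4r_2+3r_3=0$ and hence (combined with $r_2+r_3=0$) the stronger conclusion $r_2=r_3=0$; you do not need this, but it is available.

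For $r_1+r_2\ge 0$ the paper stays with $T_g\times I^2$ and cuts along $\gamma\times I^2$ for $\gamma\subset T_g$ nonseparating, reading off the inequality from the single case $p=4$. You generalize to $T_g\times P$ with $P$ a $p$-gon and let $p\to\infty$; your face count $f_3(Y^4)=p+2$, $f_2(Y^4)=3p$, $f_1(Y^4)=2p$ and the resulting $\varphi(Y^4)-\varphi(X^4)=2r_3+2p(r_1+r_2)$ are correct. The asymptotic device is a nice touch, though as the paper shows it is not strictly necessary once the earlier coefficients are pinned down. The final deduction $r_1\ge 0$ is handled the same way in both.
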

\begin{proof}
First consider $X^{4}=G^{3}\times S^{1}$, where $G^{3}$ is any closed, connected,  Haken 3-manifold, with first cutting surface in the hierarchy being $G^{3}\times \text{ point}$. Here $X^{4}|G^{3}\cong G^{3}\times I$, with boundary pattern consisting of the two copies of $G^{3}$. Then, interpreting the sequence of general inequalities 
\[
\chi(X^{4})=\chi(Y^{4})-\chi(G^{3})\ge \varphi(Y^{4}) - \chi(G^{3}) \ge \varphi(X^{4}),
\]
we obtain
\[
0=0-0\ge \varphi(G^{3}\times I)=2r_{3}+2t_{0}+2t_{1}b_{1}(G^{3})\ge \varphi(G^{3}\times S^{1})=0
\] 
and so $r_{3} + t_{0} + t_{1}b_{1}(G^{3})=0$. Since there are closed Haken 3-manifolds with arbitrary $b_{1}$, this forces $t_{1}=0$. And that forces $r_{3}+ t_{0} = 0$, or $t_{0}=-r_{3}.$

Next consider $X^{4}=G^{3}\times S^{1}$ where $G^{3}$ is a Haken 3-manifold with $\partial G^{3}=T_{g}$, a closed orientable surface of genus $g\ge 1$. Then $\partial X^{4}=T_{g}\times S^{1}$, with boundary pattern consisting of a single face. We have $Y^{4}=G^{3}\times I$ and $\partial Y^{4}=G^{3}\cup T_{g}\times I \cup G^{3}$. Again we interpret the sequence of inequalities
\[
\chi(X^{4}) = \chi(Y^{4})-\chi(G^{3}) \ge \varphi(Y^{4})-\chi(G^{3}) \ge \varphi(X^{4}).
\]

We have 
\begin{align*}
0&=(1-g)-(1-g)\\
 &\ge 2r_{2} + 3r_{3} + s_{3}(4(1-g)) -r_{3} - (1-g) \\
 &\ge r_{3}(1) + s_{3}(0) - r_{3}(1)\\
 &=0.
\end{align*}
Thus
\[
2r_{2}+2r_{3}+(1-g)(4s_{3}-1)=0.
\]
Letting $g=1$, we have $r_{2}+r_{3}=0$ and letting $g>1$ we then have $4s_{3}-1=0$, so that $s_{3}=\frac{1}{4}$.

Now consider $X^{4}=T_{g}\times I^{2}$, where $T_{g}$ is a smooth surface of genus $g\ge 1$.  Here $I^{2}$ denotes the 2-disk with its standard boundary pattern consisting of 4 edges and 4 vertices. The product inherits a boundary pattern and Haken manifold structure in a natural way, with four faces of the form $T_{g}\times I$. Then we get $$4r_{2}+4r_{3}+ 4s_{3}(2-2g)+t_{0}\le 2-2g.$$
Setting $g=1$, we find that $4r_{2}+4r_{3}+t_{0}\le 0$, hence $4r_{2}+3r_{3}\le 0$. Since we already knew that $r_{2}+r_{3}=0$, we see that $r_{2}\le 0$ and $r_{3}=-r_{2}\ge 0$.

We next consider $G^{3}\cong S^{1}\times I^{2}$, where $S^{1}$ is a non-separating simple closed curve in $T_{g}$, and we set  $Y^{4}=X^{4}|G^{3}= T_{g-1,2}\times I^{2}$, where $T_{g-1,2}$ denotes a twice-punctured surface of genus $g-1$.  Again we interpret the full sequence of inequalities
\[
\chi(X^{4}) = \chi(Y^{4})-\chi(G^{3}) \ge \varphi(Y^{4})-\chi(G^{3})\ge \varphi(X^{4}).
\]
Then identifying the various faces of $X^{4}$ we have $X^{4}$ has $F^{2}X^{4}=4(T_{g}\times\text{pt})$, and $F^{3}X^{4}=4(T_{g}\times I)$. Similarly $Y^{4}$ has $F^{1}Y^{4}=8(S^{1}\times\text{ pt})$, $F^{2}Y^{4}=8(S^{1}\times I)\cup 4(T_{g-1,2})$, and $F^{3}Y^{4}=4(T_{g-1,2}\times I)$.

Thus we obtain
\[
2-2g \ge \varphi (Y^{4})-\chi (G) = 8r_{1}+r_{2}(12-4+1)+\frac{1}{4}(4)(2-2g) - 0 \ge r_{2}+2-2g
\]
Thus
\[
0\ge 8r_{1}+9r_{2}\ge r_{2}
\]
which implies that $r_{1}+r_{2}\ge 0$. Since $r_{2}\le 0$ we have $r_{1}\ge 0$.
\end{proof}

At this stage we may assume our function $\varphi(X^{4})$ has the following form
\[
\varphi(X^{4})=r_{0}f_{0}(X^{4})+ r_{1}f_{1}(X^{4})+r_{2}\left(f_{2}(X^{4})-f_{3}(X^{4})+b_{0}(\partial X^{4})\right)+ \frac{1}{4}\sum_{F^{3}<X^{4}}\chi(F^{3}),
\]
where
\[
r_{1}\ge 0, r_{2}\le 0  \text{ and } r_{1}+r_{2}\ge 0
\]

\subsection{Transformation Rule for $\varphi$}
Next we consider somewhat more abstractly the process of cutting open a Haken manifold along a surface in its hierarchy.

Let $G^{3}$ be a cutting hypersurface in a Haken 4-manifold $X^{4}$ and let $Y^{4}=X^{4}|G^{3}$.  We assume that $G^{3}$ is connected, although $\partial G^{3}$ may well be disconnected. We know that $\chi(Y^{4})=\chi(X^{4})+\chi(G^{3})$. We need $\varphi(Y^{4})-\chi(G^{3})\ge \varphi(X^{4})$, or $\varphi(Y^{4}) \ge \varphi(X^{4}) + \chi(G^{3})$.
In this notation we know that a suitable $\varphi$ function must have the form
\[
\varphi(X^{4})=r_{0}f_{0}(X^{4}) +  r_{1}f_{1}(X^{4})+r_{2}f_{2}(X^{4})+r_{3}f_{3}(X^{4})+\frac14\chi(F^{3}X^{4}) -r_{3}b_{0}(\partial X^{4}).
\]

Now $F^{0}Y^{4} = F^{0}X^{4}\sqcup 2F^{0}G^{3}$. In words, the vertices of $Y^{4}$ consist of the vertices of $X^{4}$, together with two copies of the vertices of $G^{3}$. Note also that the vertices of $G^{3}$ are exactly where $\partial G^{3}$ meets $F^{1}X^{4}$.

Similarly $F^{1}Y^{4}=F^{1}X^{4}|F^{0}G$, together with two copies of $F^{1}G^{3}$, which is exactly $\partial G^{3}\cap F^{2}X^{4}$. In words the $1$-faces of $Y^{4}$ consist of the 1-faces of $X^{4}$ (as cut open by $\partial G^{3})$ together with two copies of the 1-faces of $G^{3}$.

Further  $F^{2}Y^{4}=F^{2}X^{4}|F^{1}G$, together with two copies of $F^{2}G^{3}$, which is exactly $\partial G^{3}\cap F^{3}X^{4}$.

Finally $F^{3}Y^{4}=F^{3}X^{4}|F^{2}G$, together with two copies of $G^{3}$.

Thus, using the sum formula for Euler characteristics,  we observe that
\begin{align*}
\chi (F^{0}Y^{4}) &= \chi (F^{0}X^{4}) +  2\chi (F^{0}G^{3})
\\
\chi (F^{1}Y^{4}) &= \chi (F^{1}X^{4}) +  \chi (F^{0}G^{3}) +2 \chi (F^{1}G^{3})
\\
\chi (F^{2}Y^{4}) &= \chi (F^{2}X^{4}) + \chi (F^{1}G^{3}) + 2\chi (F^{2}G^{3})
\\
\chi (F^{3}Y^{4}) &= \chi (F^{3}X^{4})  + \chi (F^{2}G^{3}) + 2\chi (G^{3})\\
\end{align*}
About the face numbers themselves we must be  less precise, noting that in general a connected codimension-one Haken manifold might or might not separate a connected manifold into two pieces. \begin{align*}
f_{0}(Y^{4})&= f_{0}(X^{4})+2f_{0}(G^{3}) \\
f_{1}(X^{4})+2f_{1}(G^{3}) \le f_{1}(Y^{4})&\le f_{1}(X^{4})+2f_{1}(G^{3})+f_{0}(G^{3})\\
f_{2}(X^{4})+2f_{2}(G^{3}) \le f_{2}(Y^{4})&\le f_{2}(X^{4})+2f_{2}(G^{3})+f_{1}(G^{3})\\
f_{3}(X^{4})+2b_{0}(G^{3}) \le f_{3}(Y^{4})&\le f_{3}(X^{4})+2b_{0}(G^{3})+f_{2}(G^{3})
\end{align*}

We also need
\[
b_{0}(\partial X^{4})\le b_{0}(\partial Y^{4})\le b_{0}(\partial X^{4})+{1}
\]
(assuming $G^{3}$ is connected).

We require
\[
\chi(X^{4})=\chi(Y^{4})-\chi(G^{3})\ge \varphi(Y^{4}) - \chi(G^{3}) \ge \varphi(X^{4}).
\]

From the above inequalities, taking into account the signs of the coefficients, we have
\begin{align*}
\varphi (Y^{4}) -\chi (G^{3})&\ge&\\
&\ge\varphi (X^{4}) +2r_{0}f_{0}(G^{3})+2r_{1}f_{1}(G^{3})+2r_{2}f_{2}(G^{3})+r_{2}f_{1}(G^{3})
+2r_{3}-r_{3}(b_{0} G^{3}+{1}) \\ 
&+ \frac{1}{4}\chi (F^{2}G^{3}) + \frac{1}{2}\chi (G^{3}) - \chi( G^{3})\\
&\ge \varphi (X^{4})
\end{align*}
Therefore we require
\[
2r_{0}f_{0}(G^{3})+2r_{1}f_{1}(G^{3})+2r_{2}f_{2}(G^{3})+r_{2}f_{1}(G^{3})
+2r_{3}-r_{3}(b_{0}(\partial G^{3})+{ 1}) + \frac{1}{4}\chi (F^{2}G^{3}) - \frac{1}{2}\chi (G^{3})
\ge 0
\]
 Next substitute $r_{3}=-r_{2}$ to obtain
 \[
2r_{0}f_{0}(G^{3})+2r_{1}f_{1}(G^{3})+r_{2}(2f_{2}(G^{3})+ f_{1}(G^{3})
-2+b_{0}(\partial G^{3})+{ 1}) + \frac{1}{4}\chi (F^{2}G^{3}) - \frac{1}{2}\chi (G^{3})
\ge 0
\]
We now apply this formula to the case of a Haken 3-manifold $G^{3}$ with essential boundary $\partial G^{3}=T_{g}$, a closed surface of genus $g$. In this case we have $f_{0}(G^{3})=f_{1}(G^{3})=0, f_{2} (G^{3})=1$, and $\chi (F^{2}G^{3})=2-2g$. Therefore
\[
r_{2}(2+0-2+1+{ 1} ) +\frac{1}{4}(2-2g)-\frac{1}{2}(1-g)\ge 0, \text{ or }  2r_{2}\ge 0
\]
Since we already know that $r_{2}\le 0$, we conclude that $r_{2}=0$ (and, then, $r_{3}=0$).{}

Thus our relation for Haken 3-manifolds becomes
\[
2r_{0}f_{0}(G^{3})+2r_{1}f_{1}(G^{3}) + \frac{1}{4}\chi (F^{2}G^{3}) - \frac{1}{2}\chi (G^{3})
\ge 0
\]

At this stage we may assume that the function $\varphi(X^{4})$ has the following form
\[
\varphi(X^{4})=r_{0}f_{0}(X^{4})+ r_{1}f_{1}(X^{4}) + \frac14\sum_{F^{3}<X^{4}}\chi(F^{3}),\text{ where } r_{1}\ge 0.
\]

If we apply the relation
\[
 r_{0}(2f_{0}(G^{3}))+ r_{1}(2f_{1}(G^{3}))
 + \frac{1}{4}\left(\chi (F^{2}G^{3})+\chi (\partial G^{3})  \right)
\ge \chi G^{3}
\]
to a Haken $3$-cell, we would have $\chi(G^{3})=1$, $\chi(\partial G^{3})=2$, $3f_{0}(G^{3})=2f_{1}(G^{3})$, and $\chi (F^{2}G^{3})=f_{2}(G^{3})$, yielding
\[
(2r_{0}+3r_{1})f_{0}(G^{3}) + \frac{1}{4}\left(f_{2}(G^{3}) + 2\right) \ge 1
\]
Also the relation $f_{0}-f_{1}+f_{2}=2$ yields $f_{2}=\frac{1}{2}f_{0}+2$. so that 
\[
(2r_{0}+3r_{1})f_{0}(G^{3}) + \frac{1}{4}\left(\frac{1}{2}f_{0}(G^{3}) + 4\right) \ge 1
\]
which implies the additional restriction
\[
2r_{0}+3r_{1} \ge -1/8.
\]

\subsection{Standard Haken cells}

For Haken $4$-cells, and, more generally, Haken manifolds in which all faces are Haken cells, all Euler characteristics of all faces are $+1$. In particular $\sum_{F^{k}<X^{4}}\chi(F^{k}) = f_{k}(X^{4})$, and this simplifies some aspects. 

At this stage our formula for $\varphi(X^{4})$ is
\[
\varphi(X^{4})=r_{0}f_{0}(X^{4})+ r_{1}f_{1}(X^{4}) + \frac14\sum_{F^{3}<X^{4}}\chi(F^{3}),\text{ where } r_{1}\ge 0, 2r_{0}+3r_{1} \ge -1/8.
\]
For a Haken $4$-cell we would have $f_{1}=2f_{0}$ and $\sum_{F^{3}<X^{4}}\chi(F^{3})=f_{3}(X^{4})$, and $\chi(X^{4})=1$. Hence for a Haken 4-cell the formula takes the shape
\[
\varphi(X^{4})=(r_{0}+2 r_{1})f_{0}(X^{4}) + \frac{1}{4}f_{3}(X^{4}),
\]
and we have the requirement that for a Haken 4-cell
\[
(r_{0}+2 r_{1})f_{0}(X^{4}) + \frac{1}{4}f_{3}(X^{4})\le 1.
\]

Plugging explicit Haken 4-cells into the formula gives another restriction on the coefficients. 
\begin{prop}
We must have $r_{0}+2r_{1}\le -1/16.$
\end{prop}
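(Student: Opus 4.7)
The plan is to evaluate the constraint $\varphi(X^4)\le 1$ on a carefully chosen explicit Haken 4-cell and read off the required inequality on $r_0 + 2r_1$.

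The natural candidate is the standard 4-cube $I^4$, with its product boundary pattern inherited from the boundary pattern on $I$ (two endpoints as the boundary pattern faces). First I would verify that $I^4$ really is a Haken 4-cell: its 3-faces are combinatorial 3-cubes, which are listed earlier as standard Haken 3-cells; its 2-faces are 4-gons, which qualify as Haken 2-cells ($p$-gons with $p\ge 4$); the intersection of any $k$ facets is either empty or a single Haken $(4-k)$-cell; and usefulness is automatic from the product structure. Next I would count faces: $f_0(I^4) = 16$, and $f_3(I^4) = 8$.

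Plugging these into the simplified formula
\[
\varphi(X^{4}) = (r_{0}+2 r_{1})\,f_{0}(X^{4}) + \tfrac14 f_{3}(X^{4})
\]
valid for Haken 4-cells, I obtain
\[
\varphi(I^4) = 16(r_0 + 2r_1) + \tfrac14 \cdot 8 = 16(r_0+2r_1) + 2.
\]
Imposing the requirement $\varphi(I^4) \le 1$ yields $16(r_0 + 2r_1) \le -1$, i.e.\ $r_0 + 2r_1 \le -\tfrac{1}{16}$, which is exactly the claim.

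No real obstacle arises here; the only subtle point is the verification that $I^4$ is genuinely a Haken 4-cell in the sense of the paper (complete and useful boundary pattern with Haken 3-cell faces), but this is built into the product construction and is already implicitly used in the earlier evaluation on $T_g\times I^2$. Combined with the earlier inequality $2r_0 + 3r_1 \ge -\tfrac18$, this new restriction will force $r_1 = 0$ and $r_0 = -\tfrac{1}{16}$, pinning down the $\varphi$-function uniquely as in Theorem \ref{thm:possibleformula}.
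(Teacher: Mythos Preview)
Your proof is correct and matches the paper's approach exactly: evaluate the Haken 4-cell constraint $\varphi(X^4)\le 1$ on $X^4=I^4$ with $f$-vector $(16,32,24,8)$, obtaining $16(r_0+2r_1)+2\le 1$ and hence $r_0+2r_1\le -\tfrac{1}{16}$. The paper's proof is the same computation, stated more tersely.
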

\begin{proof}
If $X^{4}=I^{4}$, with $f$-vector $(16, 32, 24, 8)$,  we have
\(
(r_{0}+2 r_{1})16+ \frac{1}{4}8\le 1.
\)
Hence
\(
r_{0}+2r_{1}\le -1/16.
\)
\end{proof}

 \subsection{Final determination of the $\varphi$-function}
\begin{lemma}
The system of inequalities 
\begin{align*}
r_{0}+2r_{1}&\le -1/16\\
2r_{0}+3r_{1} &\ge -1/8\\
r_{1} &\ge 0
\end{align*}
has unique solution $r_{0}=-1/16$, $r_{1}=0$.
\end{lemma}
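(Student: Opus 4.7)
The plan is to eliminate $r_0$ between the first two inequalities to pin down $r_1$, and then substitute back to pin down $r_0$.

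First I would double the first inequality to obtain $2r_0 + 4r_1 \le -1/8$, and compare this with the second inequality $2r_0 + 3r_1 \ge -1/8$. Subtracting the second from the first eliminates both $r_0$ and the constant $-1/8$, leaving $r_1 \le 0$. Combined with the hypothesis $r_1 \ge 0$, this forces $r_1 = 0$.

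Substituting $r_1 = 0$ back into the first and second inequalities gives $r_0 \le -1/16$ and $2r_0 \ge -1/8$, i.e., $r_0 \ge -1/16$. These two bounds together force $r_0 = -1/16$. Conversely, $(r_0, r_1) = (-1/16, 0)$ plainly satisfies all three inequalities with the first two holding as equalities, so the solution is indeed unique.

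I do not anticipate any real obstacle: the argument is entirely a two-variable linear programming exercise, and the feasible region degenerates to a single point precisely because the two slanted constraints, when scaled to have matching $r_0$-coefficients, have opposite sense and identical right-hand sides, squeezing out all freedom in $r_1$ once $r_1 \ge 0$ is imposed.
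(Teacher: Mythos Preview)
Your proof is correct and follows essentially the same approach as the paper: double the first inequality, combine it with the second to eliminate $r_{0}$ and obtain $r_{1}\le 0$, then use $r_{1}\ge 0$ to force $r_{1}=0$ and substitute back to squeeze $r_{0}=-1/16$. The only (harmless) addition you make is the explicit verification that $(-1/16,0)$ satisfies the system.
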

\begin{proof}
The second inequality is equivalent to $-2r_{0}-3r_{1} \le 1/8$, the first inequality yields $2r_{0}+4r_{1} \le -1/8$, and adding we find $r_{1}\le 0$. We conclude that $r_{1}=0$. Then the first two inequalities show that $r_{0}\le -1/16$ and $r_{0}\ge -1/16$, so that $r_{0}=-1/16$. 
\end{proof}

In particular, there is a unique possible $\varphi$-function given as
\[
\varphi(X^{4})= -\frac{1}{16}f_{0}(X^{4})+\frac{1}{4}\sum_{F^{3}<X^{4}}\chi(F^{3}).
\]
This completes the proof of Theorem \ref{thm:possibleformula}.

\subsection{The transformation law}

Here we complete the proof of Proposition \ref{prop:transformationlaw}.

We have
\[
\varphi(X^{4})=-\frac{1}{16}f_{0}(X^{4}) + \frac{1}{4}\chi(F^{3}X^{4}).
\]
We suppose $X^{4}$ is a Haken $4$-manifold with $G^{3}\subset X^{4}$  a connected essential hypersurface, and set $Y^{4}=X^{4}| G^{3}$, with the induced boundary pattern. Then we need to prove that
\[
\varphi(Y^{4}) 
=  \varphi(X^{4}) -\frac18f_{0}(G^{3})  
 + \frac14\left(\chi (F^{2}G^{3})+\chi (\partial G^{3})  \right) .
\]

\begin{proof}
We have
\begin{align*}
\varphi(Y^{4})&=-\frac{1}{16}f_{0}(Y^{4}) +  \frac14\sum_{F^{3}<Y^{4}}\chi(F^{3}Y^{4})
\\
&= -\frac{1}{16}\left(f_{0}(X^{4})+2f_{0}(G^{3})\right) +  \frac14\left(\chi (F^{3}X^{4})  + \chi (F^{2}G^{3}) + 2\chi (G^{3})\right)
\\
&=
\varphi(X^{4})-\frac18f_{0}(G^{3}) +   \frac14\left( \chi (F^{2}G^{3}) + 2\chi (G^{3})\right)
\\
&=
\varphi(X^{4}) -  \frac{1}{8}f_{0}(G^{3})  + \frac14\left( \chi (F^{2}G^{3}) + \chi (\partial G^{3})\right).
\end{align*}
\end{proof}

\section{Evaluation of the $\varphi$-function on Haken 4-cells}\label{sec:duality}
Here we give a characterization of Haken 4-cells in terms of corresponding dual flag simplicial 3-spheres. This allows us to relate the evaluation of the $\varphi$-function to the Charney-Davis conjecture.
\subsection{Discussion of Dual Complexes and Flag Simplicial Spheres}

In polytope theory there is a basic duality in which every convex polytope $P$ has a dual polytope $P^{*}$ and for which simple polytopes are exactly the duals of simplicial polytopes. Moreover the double dual $P^{**}$ is combinatorially equivalent to $P$.  If $P$ has dimension $d$, then $P^{*}$ also has dimension $d$, and the face numbers satisfy $f_{k}P^{*} = f_{d-k-1}P$. A similar relation holds for more general forms of duality considered here. See Ziegler \cite{Ziegler1995}, for example.

For a general simplicial $n$-manifold $K$ and $p$-simplex $\sigma^{p}\in K$ the dual cone to $\sigma^{p}$ is a topological $(n-p)$-homology cell (a cone on a homology sphere)  that is realized as a subcomplex of the first barycentric subdivision $K'$ of $K$. Explicitly we can write
\[
D(\sigma)=\{\hat{\sigma} *\hat{\tau}_{1}*\hat{\tau}_{2}*\cdots *\hat{\tau}_{k}\}
\]
where $\sigma\subset \tau_{1}\subset \tau_{2}\subset \cdots \subset \tau_{k}$ is a chain of proper inclusions of simplices in $K$  containing $\sigma$. Here $\hat{\rho}$ denotes the barycenter of a simplex $\rho$. Up to homeomorphism $D(\sigma)$ is equivalent to the cone on the link of $\sigma$ in $K$. In dimensions greater than or equal to 5 this dual cone need not necessarily be a topological cell.

In order to guarantee that the dual cones are cells, the standard property to assume is that the triangulations in question are ``combinatorial'' or equivalently ``PL'', meaning that the links of simplices are PL equivalent to standard PL spheres of the appropriate dimension.  In lower dimensions, with which we are primarily concerned here (at most 4, boundary at most 3), the PL property is automatic. This is not true in dimensions greater than 4 because of the existence of non-combinatorial triangulations that arise from multiple suspensions of non-simply connected homology 3-spheres. It follows that the dual cones to simplices in a simplicial $n$-manifold, $n\le 4$,  are always cells.

In higher dimensions the triangulations we will obtain in general have the  possibly weaker property that links of all simplices are only \emph{homeomorphic} to spheres of the appropriate dimensions. 

We summarize the preceding discussion in the following proposition.

\begin{prop}
If $K$ is a simplicial $n$-manifold with the property that links of $k$-simplices are homeomorphic to $S^{n-k-1}$ for $k=0,1,\dots, n-1$, then the dual cones are topological cells and define a simple regular cell complex structure on the underlying space $|K|$. \qed
\end{prop}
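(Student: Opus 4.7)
My plan is to identify each dual cone $D(\sigma)$ with an explicit cone on the link of $\sigma$, and then verify the regular cell complex axioms directly from the combinatorics of the barycentric subdivision $K'$. First I would observe that, as a subcomplex of $K'$, the dual cone $D(\sigma)$ is the union of closed simplices $\hat{\sigma}*\hat{\tau}_{1}*\cdots*\hat{\tau}_{k}$ indexed by ascending chains $\sigma\subsetneq\tau_{1}\subsetneq\cdots\subsetneq\tau_{k}$ of simplices of $K$. The assignment $\tau\mapsto\tau\setminus\sigma$ is a poset isomorphism from the simplices of $K$ properly containing $\sigma$ onto the nonempty simplices of $\mathrm{lk}(\sigma)$; passing to chains realizes $D(\sigma)$ as the simplicial join $\hat{\sigma}*B(\mathrm{lk}(\sigma))$, where $B$ denotes barycentric subdivision. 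This combinatorial identification is purely formal and does not use the sphere hypothesis.

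Next I would invoke the sphere hypothesis. If $\sigma$ is a $p$-simplex, then $|\mathrm{lk}(\sigma)|\cong S^{n-p-1}$ by assumption, and barycentric subdivision does not change the underlying space, so $|B(\mathrm{lk}(\sigma))|\cong S^{n-p-1}$. The simplicial join with a point is the topological cone, and the topological cone on any topological sphere is a topological ball, so $D(\sigma)$ is a closed $(n-p)$-cell whose topological boundary is identified with $|B(\mathrm{lk}(\sigma))|$. To assemble a CW decomposition of $|K|$, I would verify that the open dual cells are pairwise disjoint and cover $|K|$: every open simplex $\hat{\sigma}_{0}*\hat{\sigma}_{1}*\cdots*\hat{\sigma}_{k}$ of $K'$ (with $\sigma_{0}\subsetneq\sigma_{1}\subsetneq\cdots$) lies in the interior of $D(\sigma_{0})$ and in no other open dual cell. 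The topological boundary of $D(\sigma)$ is the union $\bigcup_{\tau\supsetneq\sigma}D(\tau)$ by construction, a finite union of lower-dimensional closed dual cells; this is precisely the regularity condition.

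Finally, the resulting complex is simple in the dual-polytope sense because the $0$-cells are the barycenters $\hat{\Delta^{n}}$ of top-dimensional simplices, and the dual cells incident to such a $0$-cell correspond bijectively with the proper faces of $\Delta^{n}$, reproducing the face poset of a standard $n$-simplex locally. The one delicate point in the argument, and the reason the proposition fails in higher dimensions without the sphere hypothesis, is the step from ``$|\mathrm{lk}(\sigma)|$ is a topological sphere'' to ``$D(\sigma)$ is a topological ball'': this uses only the purely topological fact that the cone on $S^{k}$ is $D^{k+1}$, so no PL or combinatorial structure on the links is needed. In dimensions at most four the hypothesis is automatic because links of simplices in a simplicial manifold are necessarily $2$-spheres or lower, but the proof itself is dimension-free.
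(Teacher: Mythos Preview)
Your proposal is correct and in fact supplies more detail than the paper does. The paper does not give a separate proof of this proposition: it is marked with \qed\ as a summary of the informal discussion immediately preceding it, whose content is precisely that $D(\sigma)$ is (up to homeomorphism) the cone on $\mathrm{lk}(\sigma)$, and hence a cell once the link is known to be a sphere. Your plan reproduces this identification explicitly via the poset isomorphism $\tau\mapsto\tau\setminus\sigma$ and the join $\hat{\sigma}*B(\mathrm{lk}(\sigma))$, and then goes beyond the paper by sketching the verification of the CW axioms (disjointness and covering of open dual cells, boundary of $D(\sigma)$ as a union of smaller dual cells).

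One minor remark on the simplicity check: the paper's definition requires that \emph{each} $k$-cell be the intersection of exactly $n-k+1$ of the $n$-cells, not merely that the local picture at a $0$-cell be that of a simplex. The argument you want is that $D(\sigma^{p})\subset D(v)$ if and only if $v$ is a vertex of $\sigma^{p}$, and $\sigma^{p}$ has exactly $p+1=n-(n-p)+1$ vertices; moreover $\bigcap_{v\in\sigma^{p}}D(v)=D(\sigma^{p})$ because a simplex of $K$ containing every vertex of $\sigma^{p}$ contains $\sigma^{p}$. This is implicit in what you wrote but worth stating for the general $k$-cell.
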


We consider a regular cell complex structure on an $n$-sphere and the additional properties necessary for it to give a Haken cell structure. A \emph{regular} cell complex is one in which each closed cell is embedded and has its boundary a union of lower dimensional cells. A regular cell complex structure on an $n$-manifold is \emph{simple} if each $k$-cell is the intersection of exactly $n-k+1$ of the $n$-cells in the  cell complex.
\begin{prop}
Given a simple regular cell structure $X$ on an $n$-manifold $M^{n}$ there is a tame simplicial structure $K$ on $M^{n}$ such that $X$ is equivalent to the dual cell decomposition associated to $K$.
\end{prop}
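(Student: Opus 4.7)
The plan is to build $K$ combinatorially as a nerve of the cover of $M^{n}$ by its closed top cells, and then to realize $K$ geometrically inside the barycentric subdivision $X'$ of $X$.

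I would first define $K$ as an abstract simplicial complex whose vertex set is the set of top cells $C$ of $X$, written $v_{C}$, and whose $k$-simplices are the subsets $\{v_{C_{0}},\dots,v_{C_{k}}\}$ with $C_{0}\cap\cdots\cap C_{k}\neq\emptyset$. By the simple hypothesis, any such nonempty intersection is a single $(n-k)$-cell of $X$, and every $(n-k)$-cell of $X$ arises uniquely this way, so the assignment $\sigma\mapsto\sigma^{*}:=\{v_{C}:C\supset\sigma\}$ is a dimension-reversing, order-reversing bijection between the cells of $X$ and the simplices of $K$; in particular the combinatorial face relation $\tau^{*}<\sigma^{*}$ in $K$ matches $\sigma<\tau$ in the face poset of $X$.

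Next I would realize $|K|$ geometrically using $X'$ as a scaffold. For each cell $\sigma$ of $X$, introduce the dual region
\[
S_{\sigma}=\bigcup\bigl\{\,[\hat\sigma,\hat\tau_{1},\dots,\hat\tau_{m}]\,:\,\sigma<\tau_{1}<\cdots<\tau_{m}\,\bigr\}\subset X',
\]
which by the simple condition together with the manifold hypothesis is a topological $(n-\dim\sigma)$-ball, namely the cone from $\hat\sigma$ on $\mathrm{link}(\hat\sigma,X')$. Taking $v_{C}=\hat C$ for each top cell, I would realize $\sigma^{*}$ inductively on the dimension of $\sigma^{*}$: for $\sigma$ a top cell set $\sigma^{*}=\{\hat C\}$; for lower-dimensional $\sigma$ the proper faces $\tau^{*}$ (with $\sigma<\tau$) have already been built and combinatorially assemble into an embedded $(n-\dim\sigma-1)$-sphere lying in $\partial S_{\sigma}$, and I define $\sigma^{*}$ to be the cone from $\hat\sigma$ on this sphere, taken inside $S_{\sigma}$.

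Finally I would verify that $K$ is a tame simplicial structure on $M^{n}$ whose dual cell decomposition is $X$. Tameness reduces, via the poset bijection, to checking that $\mathrm{link}(\sigma^{*},K)$ is a topological sphere of the expected dimension; this link is the nerve of $\mathrm{link}(\sigma,X)$, which is a sphere because $X$ is a simple regular cell complex on an $n$-manifold. That the dual cell decomposition of $K$ recovers $X$ follows because each $\sigma^{*}$ passes through $\hat\sigma$ by construction, so $D(v_{C})=C$ for every top cell $C$ and, iterating through the face poset, $D(\sigma^{*})=\sigma$ in general. The main technical obstacle is the inductive coning step: one must show that the boundary sphere built at each stage is not just a combinatorial $(n-\dim\sigma-1)$-sphere but is topologically embedded in $\partial S_{\sigma}$, so that coning from $\hat\sigma$ produces an honestly embedded ball whose interior meets adjacent simplices only along common faces. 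For $n\leq 4$ this is automatic from PL theory, but in higher dimensions one must lean on the assumed sphericity of links of cells of $X$ in order to avoid pathologies such as non-combinatorial triangulations arising from multiple suspensions of homology spheres.
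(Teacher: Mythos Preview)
Your proposal is correct and follows essentially the same route as the paper: define $K$ as the nerve of the closed top cells, with vertices the barycenters $\hat C$, and realize it inside the barycentric subdivision $X'$. The paper's execution is more direct, however. Rather than building the simplices $\sigma^{*}$ inductively by coning, it observes at once that for each vertex $x$ of $X$ the simplices of $X'$ containing $x$ (those of the form $[x,\hat\tau_{1},\dots,\hat\tau_{k}]$ with $x\in\tau_{1}\subset\cdots\subset\tau_{k}$) assemble, by simplicity, into the barycentric subdivision of a single $n$-simplex $S_{x}$ with vertices $\hat\sigma_{0},\dots,\hat\sigma_{n}$, the barycenters of the $n{+}1$ top cells through $x$. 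The collection of the $S_{x}$ and their faces is $K$, and $X'$ is simultaneously the barycentric subdivision of $X$ and of $K$. This dissolves your ``main technical obstacle'': your inductively coned $\sigma^{*}$ is nothing other than the dual block $S_{\sigma}$ already sitting as a subcomplex of $X'$, so the required embeddings and face incidences come for free from the simplicial structure of $X'$, with no separate PL argument needed in any dimension.
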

\begin{proof}[Proof sketch]
In a manner similar to that used for a simplicial complex, the  regular cell complex  has a barycentric subdivision obtained inductively by starring in order of increasing dimension each cell at a point (which we refer to somewhat loosely as a barycenter) in its interior. This defines a simplicial complex $L$ with underlying space $M^{n}$ in such a way that the cells of the regular cell structure are obtained as appropriate unions of simplices of $L$. A typical $k$-simplex of $L$ has the form
\[
\hat{\tau}_{0} *\hat{\tau}_{1}*\hat{\tau}_{2}*\cdots *\hat{\tau}_{k}
\]
where
$\tau_{0}\subset \tau_{1}\subset \tau_{2}\subset \cdots \subset \tau_{k}$ is a chain of proper inclusions of cells in $X$.

Assuming that the given cell complex $X$ is simple, we argue that the simplices of $L$ can be regrouped to form another simplicial complex $K$ for which $L$ is the barycentric subdivision.

For a given vertex $x$ of $X$, which will also be a vertex of $L$, we consider the collection of all simplices of the form
\[
x *\hat{\tau}_{1}*\hat{\tau}_{2}*\cdots *\hat{\tau}_{n}
\]
and their faces, where
$\{x\}\subset \tau_{1}\subset \tau_{2}\subset \cdots \subset \tau_{k}$ is a chain of proper inclusions of cells in $X$. By simplicity of the original cell structure $X$ the union of these simplices defines an $n$-simplex 
\[
S_{x}:=\hat{\sigma}_{0}*\hat{\sigma}_{1}*\dots *\hat{\sigma}_{n}
\]
where $\sigma_{0}, \sigma_{1},\dots,\sigma_{n}$ are the $n+1$ $n$-cells of $X$ containing the vertex $x$.
The collection of these $n$-simplices $S_{x}$, $x$ a vertex of $X$, together with all their faces, defines a simplicial structure $K$ on $M^{n}$. Tracing through definitions we see that $L$ is exactly its barycentric subdivision.
\end{proof}

\begin{remark}
The preceding result is a mild topological extension of a result of M. Bayer \cite{Bayer1988}.
\end{remark}
\begin{prop}%
If $X^{n}$ is a Haken n-cell, then
\begin{enumerate}
\item
There is no 3-cycle in the 1-skeleton, and
\item
There is no empty triangle in the 1-skeleton of the dual simplicial $(n-1)$-sphere.
\end{enumerate}
\end{prop}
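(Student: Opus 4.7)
For assertion (2), the plan is to invoke the usefulness of the boundary pattern directly. A Haken $n$-cell is simply connected, so usefulness clause (3) from Section~\ref{sec:facts} applies: any three facets that meet pairwise nontrivially must have nonempty triple intersection. In the dual simplicial $(n-1)$-sphere, an empty triangle is precisely three vertices (three facets of $\partial X^n$) whose pairwise intersections are nonempty $(n-2)$-faces but whose triple intersection vanishes. Usefulness forbids this, and when the triple intersection is nonempty it is automatically an $(n-3)$-face and so spans a $2$-simplex of the dual.

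For assertion (1), I would argue by contradiction and casework on $T:=S_1\cap S_2\cap S_3$, where $S_i$ is the set of $n$ facets of the boundary pattern whose intersection is a vertex $v_i$ of the alleged $3$-cycle. Each edge $e_{ij}$ is the intersection of exactly $n-1$ facets, all contained in $S_i\cap S_j$; since $v_i\ne v_j$ we cannot have $S_i=S_j$, so $|S_i\cap S_j|=n-1$. Inclusion--exclusion inside $S_1$ then yields
\[
|T| \;\ge\; 2(n-1)-n \;=\; n-2,
\]
so $|T|\in\{n-2,n-1,n\}$.

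Each case must give a contradiction. If $|T|=n$ the vertices coincide. If $|T|=n-1$ then $\bigcap T$ is a single $1$-face carrying all three $v_i$, which is impossible since a Haken $1$-cell is an interval with only two endpoints. If $|T|=n-2$ then $F:=\bigcap T$ is a $2$-face, hence a Haken $2$-cell, i.e.\ a $p$-gon with $p\ge 4$; each $e_{ij}=\bigcap(S_i\cap S_j)$ is a side of $F$ and each $v_i$ is a corner of $F$, so the hypothetical $3$-cycle would be a triangle in the boundary circuit of this $p$-gon, impossible for $p\ge 4$.

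The decisive step is the $|T|=n-2$ case. The obstacle there is checking that the three putative edges really are edges of a single polygon~$F$: writing $S_i\cap S_j=T\cup\{\alpha_{ij}\}$ one reads off $e_{ij}=\bigcap T\cap\alpha_{ij}$ as a codimension-one subface of $F=\bigcap T$, which identifies the alleged $3$-cycle with a triangular circuit inside a single Haken $2$-cell. The heart of the argument is then the defining inequality $p\ge 4$ for Haken $2$-cells recalled in Section~\ref{sec:facts}; the remaining two cases are essentially bookkeeping about dimension and endpoint counts of lower-dimensional Haken cells.
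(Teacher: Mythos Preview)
Your proof is correct and follows essentially the same approach as the paper: for (2) you both invoke the usefulness condition directly, and for (1) you both reduce the hypothetical $3$-cycle to a triangle inside a $2$-face, contradicting the $p\ge 4$ requirement for Haken $2$-cells. Your explicit casework on $|T|\in\{n-2,n-1,n\}$ spells out the degenerate possibilities that the paper's one-line ``It follows that all three vertices lie in exactly $n-2$ facets'' leaves implicit, but the substance is the same.
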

\begin{proof}
The assertion is clear by definition if $n\le 2$, where the two conditions essentially coincide, so assume $n\ge 3$.

Let $v_{0}, v_{1}, v_{2}$ be the vertices of a 3-cycle in the 1-skeleton of $X$. Each vertex is the intersection of exactly $n$ facets. Each edge is the intersection of exactly $n-1$ of the $n$ facets corresponding to either of its endpoints. It follows that all three vertices lie in exactly $n-2$ facets. Since $n\ge 3$ we conclude that the three vertices are the vertices of a triangular 2-face. But every face of a Haken $n$-cell (of any dimension) is also a Haken cell. Since a triangular face is not Haken, we have assertion (1).

Assertion (2) is part of being a ``useful'' boundary pattern (see section \ref{useful}).
\end{proof}

Recall that a simplicial complex in which any collection of $k+1$ pairwise adjacent vertices spans a $k$-simplex is called a \emph{flag simplicial complex}. Suggestively we think of a non-flag complex as having an \emph{empty simplex} of some dimension greater than 1, i.e., a subcomplex equivalent to the boundary of a $k$-simplex that does not span a $k$-simplex.
\begin{thm}\label{thm:simplicial}
The simplicial dual of the boundary complex of a Haken $4$-cell is a flag simplicial complex.
\end{thm}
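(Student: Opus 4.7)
The plan is to verify the flag condition for the simplicial 3-sphere $K$ dual to the boundary complex of the Haken 4-cell $X^4$ by translating it into statements about intersections of facets of $X^4$ and reducing each statement to the usefulness axiom applied inside an appropriate Haken subcell. Under the duality established in the preceding propositions, vertices of $K$ correspond to facets of $X^4$, edges to pairs of facets sharing a 2-face, 2-simplices to triples sharing a 1-face, and 3-simplices to quadruples sharing a 0-face. Since $K$ is 3-dimensional, flagness amounts to three conditions: (a) every 3-clique spans a triangle, (b) every 4-clique spans a tetrahedron, and (c) there is no 5-clique (which would have to span a nonexistent 4-simplex).

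Condition (a) is exactly Assertion (2) of the preceding proposition, an immediate consequence of usefulness. For condition (b), consider four facets $\sigma_1, \sigma_2, \sigma_3, \sigma_4$ that pairwise share 2-faces; by (a) every triple among them already shares a 1-face. I would then apply usefulness condition (3) inside the simply connected Haken 3-cell $\sigma_4$: the three 2-faces $\sigma_i \cap \sigma_4$ ($i = 1, 2, 3$) of its induced boundary pattern pairwise meet in the 1-faces $\sigma_i \cap \sigma_j \cap \sigma_4$, so their triple intersection $\sigma_1 \cap \sigma_2 \cap \sigma_3 \cap \sigma_4$ must be nonempty. This supplies the common vertex needed for the 3-simplex.

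For condition (c), suppose for contradiction that five facets $\sigma_1, \ldots, \sigma_5$ of $X^4$ pairwise share 2-faces, and examine the Haken 2-cell $F := \sigma_1 \cap \sigma_2$, which is a $p$-gon with $p \geq 4$. By (a) each $F \cap \sigma_k = \sigma_1 \cap \sigma_2 \cap \sigma_k$ for $k = 3, 4, 5$ is a distinct edge of $F$, and by (b) each pair $(F \cap \sigma_j) \cap (F \cap \sigma_k) = \sigma_1 \cap \sigma_2 \cap \sigma_j \cap \sigma_k$ is a vertex of $F$. Thus the 1-skeleton of $F$ contains a 3-cycle, contradicting Assertion (1) of the preceding proposition applied to the Haken 2-cell $F$ (equivalently, $p \geq 4$).

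The step I expect to be the main obstacle is the bookkeeping for the interplay between the boundary pattern of $X^4$ and the boundary patterns induced on its subcells: to invoke usefulness inside $\sigma_4$ in step (b) and Assertion (1) inside $F$ in step (c), one must verify that the relevant intersections really are faces of the corresponding subcell's own boundary pattern. Once the bookkeeping is set up carefully, conditions (a)--(c) combine to give flagness of $K$.
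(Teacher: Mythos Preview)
Your argument is correct, and it differs from the paper's in an interesting way.

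For condition~(a) you and the paper agree: this is the preceding proposition, i.e.\ usefulness applied directly to $X^{4}$.  For condition~(c) the paper simply observes that an ``empty $4$-simplex'' in a simplicial $3$-sphere forces the whole dual to be $\partial\Delta^{4}$, hence $X^{4}$ to be a $4$-simplex, which is not Haken; your $3$-cycle argument in the $2$-face $F=\sigma_{1}\cap\sigma_{2}$ reaches the same contradiction (a triangular $2$-face) in a more hands-on way.

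The genuine divergence is in condition~(b).  The paper argues topologically: an empty $3$-simplex in the dual is a separating $\partial\Delta^{3}$-sphere inside the simplicial $3$-sphere, which dualizes to a ``vertex sum'' decomposition of $X^{4}$; the resulting $3$-cell facets are themselves vertex sums, and the link of the summing vertex produces a nontrivial small $2$-disk, violating usefulness.  You instead descend to the Haken $3$-cell $\sigma_{4}$ and apply the usefulness axiom there: the three $2$-faces $\sigma_{i}\cap\sigma_{4}$ of its induced boundary pattern pairwise meet (by~(a)), so their triple intersection is nonempty.  This works because each facet of a Haken $n$-cell is by definition a Haken $(n-1)$-cell with its own complete and useful boundary pattern, and the faces of $\sigma_{4}$ are precisely the intersections $\sigma_{i}\cap\sigma_{4}$---the bookkeeping you flagged as the main obstacle is exactly this observation, and it goes through.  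Your route is shorter and purely combinatorial, avoiding the separating-sphere/vertex-sum picture; the paper's route, on the other hand, makes visible the geometric mechanism (an essential small disk) and perhaps generalizes more transparently to the higher-dimensional statement alluded to in the introduction.
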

\begin{proof}
By the preceding proposition there is no empty 2-simplex. Suppose there is an empty 3-simplex in the simplicial dual. Such an empty 3-simplex is a separating topological 2-sphere triangulated as $\partial \Delta^{3}$. Then the simplicial boundary sphere  can be expressed as a simplicial sum of two simplicial spheres. 
On the corresponding dual Haken $4$-cell side this corresponds to ``vertex sum'' of two simple regular  4-cells. As part of that vertex sum there will be faces that are vertex sums of two  simple, regular 3-cells. 
Such an object always has a forbidden essential small 2-disk, arising from the link of a vertex in one of the simple, regular 3-cells being summed. Therefore there can be no empty 3-simplices. Finally, if there were an empty 4-simplex, then the entire boundary pattern would be that of the boundary of a 4-simplex, which is not a Haken cell. 
\end{proof}

\subsection{Haken 4-cells and the Charney-Davis quantity}
With the information in hand that Haken 4-cells have boundary complex  dual to a flag simplicial 3-sphere, we verify that the only possible $\varphi$-function given by
\[
\varphi(X^{4}) = -\frac{1}{16}f_{0}(X^{4}) + \frac{1}{4}f_{3}(X^{4})
\]
satisfies the remaining necessary condition.
{}
\begin{thm} 
\label{thm:4cellinequality}If $X^{4}$ is a Haken 4-cell, then  $-\frac{1}{16}f_{0}(X^{4}) + \frac{1}{4}f_{3}(X^{4})\le 1$.
\end{thm}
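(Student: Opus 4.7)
The plan is to reinterpret the inequality in terms of the dual of the boundary complex of $X^{4}$, which by Theorem \ref{thm:simplicial} (equivalently Theorem \ref{thm:boundaryflag}) is a flag simplicial 3-sphere $L$, and then recognize the resulting inequality as the 3-dimensional Charney--Davis inequality proved by Davis and Okun \cite{DavisOkun2001}.

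First I would record the face-number duality $f_{k}(X^{4}) = f_{3-k}(L)$ coming from the standard correspondence between a simple regular cell structure on a 3-sphere and its dual simplicial structure, already developed in Section \ref{sec:duality}. Applied to $k=0$ and $k=3$ this gives $f_{0}(X^{4}) = f_{3}(L)$ and $f_{3}(X^{4}) = f_{0}(L)$, so the claim $-\tfrac{1}{16}f_{0}(X^{4}) + \tfrac{1}{4}f_{3}(X^{4}) \le 1$ is equivalent to
\[
1 - \tfrac{1}{4}f_{0}(L) + \tfrac{1}{16}f_{3}(L) \ge 0.
\]

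Next I would invoke the Dehn--Sommerville relations for a simplicial 3-sphere, namely the Euler relation $f_{0}(L)-f_{1}(L)+f_{2}(L)-f_{3}(L)=0$ together with the incidence identity $f_{2}(L) = 2f_{3}(L)$ (every triangle bounds exactly two tetrahedra, while every tetrahedron has four triangular faces). A short substitution, which I will not grind out here, shows that under these relations the Charney--Davis quantity
\[
\kappa(L) := 1 - \tfrac{1}{2}f_{0}(L) + \tfrac{1}{4}f_{1}(L) - \tfrac{1}{8}f_{2}(L) + \tfrac{1}{16}f_{3}(L)
\]
of $L$ collapses to exactly $1 - \tfrac{1}{4}f_{0}(L) + \tfrac{1}{16}f_{3}(L)$. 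Thus the inequality we need is precisely the assertion $\kappa(L) \ge 0$ for the flag simplicial 3-sphere $L$.

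Finally I would apply the Davis--Okun theorem \cite{DavisOkun2001}, which establishes the Charney--Davis conjecture in dimension 3 for flag simplicial 3-spheres, to conclude $\kappa(L) \ge 0$ and hence the desired inequality. The substantive content of the argument lies not in these manipulations but in the two deep inputs they combine: the flagness of $L$ (Theorem \ref{thm:simplicial}, where the Haken 4-cell hypothesis is genuinely used, via the usefulness condition that forbids empty triangles and the vertex-sum argument that forbids empty 3-simplices) and the Davis--Okun theorem itself. The hard part is conceptual rather than computational: once flagness is in hand, everything else is bookkeeping from polytopal duality and Dehn--Sommerville.
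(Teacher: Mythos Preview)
Your proposal is correct and follows essentially the same route as the paper: dualize to the flag simplicial $3$-sphere $L$ via Theorem~\ref{thm:simplicial}, use the Dehn--Sommerville relations to identify the desired inequality with the Charney--Davis quantity $\kappa(L)\ge 0$, and then invoke Davis--Okun. The paper carries out the same reduction, only routing the algebra through the equivalent form $f_{1}^{*}\ge 5f_{0}^{*}-16$ before dualizing, whereas you collapse $\kappa(L)$ directly to $1-\tfrac{1}{4}f_{0}(L)+\tfrac{1}{16}f_{3}(L)$; the content is the same.
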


\begin{proof} 
This is an interpretation of the Charney-Davis conjecture \cite{CharneyDavis1995a}, resolved affirmatively in dimension 3 by Davis-Okun \cite{DavisOkun2001}. The Charney-Davis conjecture for \emph{flag} triangulations of 3-dimensional spheres states that
\[
1 - \frac{1}{2}f_{0}^{*} + \frac{1}{4}f_{1}^{*} - \frac{1}{8}f_{2}^{*} + \frac{1}{16}f_{3}^{*} \ge 0
\]
where to avoid some confusion we use $f_{i}^{*}$ to denote the number of $i$-simplices.

Using the identities $f_{0}^{*} -f_{1}^{*} +f_{2}^{*} -f_{3}^{*} =0$ and $4f_{3}^{*} =2f_{2}^{*} $, we have $f_{3}^{*} =f_{1}^{*} -f_{0}^{*} $ and $f_{2}^{*} =2(f_{1}^{*} -f_{0}^{*} )$, and we see that the Charney-Davis inequality is equivalent to $ f_{1}^{*}  \ge 5f_{0}^{*}  - 16 $ (as compared to the known, standard lower-bound  inequality $f_1^{*} \ge 4f_0^{*}  - 10$ for general simplicial 3-spheres).

The Charney-Davis inequality above dualizes into $f_{2}\ge 5f_{3}-16$, which becomes the  inequality $f_0 \ge 4f_3 -16$ when we use the dualization of $f_{3}^{*} =f_{1}^{*} -f_{0}^{*} $ above into $f_{0}=f_{2}-f_{3}$. It follows that $-\frac{1}{16}f_{0}(X^{4}) + \frac{1}{4}f_{3}(X^{4})\le 1$, as required.
\end{proof}

\section{Concluding Discussion}
In subsequent work with M. Davis we have found a more conceptual and less computational analysis that applies in all even dimensions, showing that the Euler Characteristic Sign Conjecture for Haken manifolds reduces to the Charney-Davis conjecture in all odd dimensions. We also show that the duals of Haken $n$-cells yield flag simplicial spheres in all dimensions. Details will appear in a forthcoming paper. It still seems possible, however, that there are Haken $n$-manifolds that are counterexamples to the Euler Characteristic Sign Conjecture in higher dimensions.  And in particular we suggest the problem of constructing a closed Haken 6-manifold with positive Euler characteristic, such as a Haken 6-manifold with the rational homology  the 6-sphere or  complex projective 3-space.

\bibliography{../HakenManifolds}
\bibliographystyle{plain}
\end{document}